\newtheorem{definition}{Definition}
\newtheorem{corollary}{Corollary}
\newtheorem{theorem}{Theorem}
\newtheorem{lemma}{Lemma}
\newtheorem{conjecture}{Conjecture}
\newtheorem{problem}{Problem}
\newtheorem{observation}{Observation}
\newtheorem*{theorem*}{Theorem}
\DeclareMathOperator{\conv}{conv}
\DeclareMathOperator{\md}{md}
\DeclareMathOperator{\ctd}{ctd}
\DeclareMathOperator{\pos}{pos}
\DeclareMathOperator{\cl}{cl}
\begin{document}
\setcounter{theorem}{-1}

\title{A characterization of the Carath\'eodory number for $H$-convexity}

\author{Vuong Bui\thanks{UET, Vietnam National University, Hanoi, 144 Xuan Thuy Street, Hanoi 100000, Vietnam (\texttt{bui.vuong@yandex.ru})\\
Part of the work was supported by the Deutsche Forschungsgemeinschaft
(DFG) Graduiertenkolleg ``Facets of Complexity'' (GRK 2434) at Institut f\"ur Informatik, Freie Universit\"at Berlin.}}
\date{}

\maketitle

\begin{abstract}
We show that the Carath\'eodory number for $H$-convexity is the maximum of two parameters: the Helly number for $H$-convexity and the cone number of $H$. The cone number in this article is defined as the maximal number of points of $H$ in conical position with an empty positive hull relative to the remaining points. Earlier partial results by Boltyanski and Martini can provide an exact value for the Carath\'eodory number only when the Helly number is $1$ or $2$.

We further establish connections between the Carath\'eodory numbers for $H$-convexity and that for $K$-strong convexity, where $H$ is the set of normals of $K$. Specifically, the Carath\'eodory number for $H$-convexity provides a lower bound for that of $K$-strong convexity. Moreover, if $K$ is a polytope, which has $|H|$ facets, then the Carath\'eodory number for $K$-strong convexity is at most the maximum of $|H|-1$ and the Carath\'eodory number for $H$-convexity. We conjecture a characterization of when the bound $|H|-1$ is attained. It is a consequence of a broader conjecture stating that the Carath\'eodory number for $K$-strong convexity is at most the maximum of the Carath\'eodory numbers for $H'$-convexity over all subsets $H'\subseteq H$. Finally, we observe that the Carath\'eodory number is at least the Helly number in any convex-structure where all sets are ordinarily convex.
\end{abstract}

\section{Introduction}
Convex sets, in the traditional sense, are the sets with the property that if two points are in a convex set then the segment connecting these points is also in the set. It is also known that a convex set is the intersection of a collection of halfspaces. Ordinary convexity can be extended by restricting the sets to intersect, e.g., by allowing only closed halfspaces of certain normals, or by allowing only translates of a certain convex body. The former case is known under the name $H$-convexity, while the latter one is often known as strong convexity. In this article, we provide Carath\'eodory's theorems for $H$-convexity and its connections to those for strong convexity.

$H$-convexity was introduced by Boltyanski (see \cite[Chapter III]{boltyanski2012excursions}) as follows.
\begin{definition}
Given a set of normals $H$, a set is said to be $H$-convex if it is an interesection of halfspaces of the form $\{x: \langle a,x\rangle \le b\}$ for a vector $a\in H$ and some real $b$.
\end{definition}
As a matter of convention, the normals in this paper are outer normal and they lie on the sphere $\mathbb S^{n-1}$ when we consider the convex sets in $\mathbb R^n$. We may explicitly scale them in certain cases.

Meanwhile, strong convexity arises in many sources (e.g. \cite{polovinkin1996strongly}) and it is also known under the name ``ball convexity'' (e.g. \cite{langi2013ball}).
\begin{definition}
Given a convex body $K$, a set is said to be $K$-strongly convex if it is the intersection of translates of $K$.
\end{definition}

The Helly theorem for the family of $H$-convex sets was given by Boltyanski (see \cite[Chapter III]{boltyanski2012excursions}), stating that the Helly number is the maximal number of vectors in $H$ being the vertices of a simplex containing the origin in its relative interior (which is at most the Helly number for ordinary convexity $n+1$). Recall that the Helly number for a family $\mathcal F$ of convex bodies is the least number $h$ so that if every $h$ members of $\mathcal F$ intersect, then all the members of $\mathcal F$ intersect. In fact, it was even shown that the Helly number does not decrease if we narrow down the family to only all translates of a convex body $K$ with the set of normals $H$ (see \cite[Chapter IV]{boltyanski2012excursions}). That being said we have the same Helly number for both $H$-convexity and $K$-strong convexity.

In this article, Carath\'eodory's theorem will be treated for these notions of convexity.
We first state Carath\'eodory's theorem in terms of the ordinary convexity.
\begin{theorem*}[The original Carath\'eodory theorem]
A point $p$ in the convex hull of a point set $X\subset \mathbb R^n$ is also in the convex hull of a subset $X'\subset X$ of at most $n+1$ points.
\end{theorem*}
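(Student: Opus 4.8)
The plan is to start from the definition of the convex hull: if $p\in\conv(X)$, then $p$ can be written as a finite convex combination $p=\sum_{i=1}^m \lambda_i x_i$ with $x_1,\dots,x_m\in X$, all $\lambda_i>0$, and $\sum_{i=1}^m\lambda_i=1$. Among all such representations I would fix one using the fewest points, say $m$, and aim to show $m\le n+1$. The argument proceeds by contradiction: assuming $m>n+1$, I will produce a convex representation of $p$ with strictly fewer points, contradicting minimality.

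The key observation is that $m>n+1$ forces the points $x_1,\dots,x_m$ to be affinely dependent. Indeed, the $m-1$ vectors $x_2-x_1,\dots,x_m-x_1$ lie in $\mathbb R^n$ and number more than $n$, hence are linearly dependent; rewriting this dependence yields scalars $\mu_1,\dots,\mu_m$, not all zero, satisfying $\sum_{i=1}^m\mu_i x_i=0$ and $\sum_{i=1}^m\mu_i=0$. Since the $\mu_i$ sum to zero but are not all zero, at least one of them is strictly positive.

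Next I would perturb the representation along this dependence: for a parameter $t\ge 0$ set $\lambda_i(t)=\lambda_i-t\mu_i$. Because $\sum_i\mu_i=0$ the new coefficients still sum to $1$, and because $\sum_i\mu_i x_i=0$ they still represent $p$. I would then take $t=\min\{\lambda_i/\mu_i:\mu_i>0\}$, the largest value keeping every coefficient nonnegative. For this $t$ each $\lambda_i(t)\ge 0$, while the index achieving the minimum gives $\lambda_i(t)=0$; discarding that term expresses $p$ as a convex combination of at most $m-1$ points of $X$, the desired contradiction.

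The proof is entirely elementary, so I do not anticipate a serious obstacle. The only step needing care is confirming that the chosen $t$ simultaneously preserves nonnegativity of all coefficients and annihilates at least one, which follows by separating the indices according to the sign of $\mu_i$ and checking each case. Alternatively, iterating the one-point reduction rather than invoking minimality works just as well and makes termination at $m\le n+1$ transparent.
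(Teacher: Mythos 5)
Your proof is correct: it is the standard minimal-representation argument for Carath\'eodory's theorem, using affine dependence of more than $n+1$ points in $\mathbb{R}^n$ to perturb the coefficients $\lambda_i \mapsto \lambda_i - t\mu_i$ and annihilate one of them, and the step you flag as needing care (nonnegativity at $t=\min\{\lambda_i/\mu_i:\mu_i>0\}$) goes through exactly as you describe. Note that the paper states this classical theorem only as background and gives no proof of it, so there is no in-paper argument to compare against; your write-up is a complete and valid proof of the statement.
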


Like ordinary convexity, the $H$-convex hull $\conv_H X$ (resp. $K$-strongly convex hull $\conv_K X$) of a set $X$ is defined to be the minimal $H$-convex set (resp. $K$-strongly convex hull) containing $X$. 
Note that we always assume $X$ is contained in a translate of $K$, as $\conv_K X$ is undefined otherwise.
Also note that there should not be a confusion between the notations $\conv_H$ and $\conv_K$, as throughout the paper $K$ is always a convex body and $H$ is a set of normals.

Replacing the ordinary convex hull by the two recently defined convex hulls, we study the following problem.
\begin{problem}\label{prob:main}
Given a closed set of vectors $H$ (resp. a convex body $K$), what is the smallest number $h$ (resp. $k$) such that: For every closed set $X$,
    \[
	\conv_H X = \bigcup_{X': X'\subseteq X, |X'|\le h} \conv_H X'
    \]
(resp. $\conv_K X = \bigcup\limits_{X': X'\subseteq X, |X'|\le k} \conv_K X'$)?
\end{problem}

When there is no such number $h$ (resp. $k$), we say $h=\infty$ (resp. $k=\infty$).
Note that the condition on the closedness of $X$ is necessary due to the closedness of the halfspaces and the translates of $K$: For example, consider $H=\{(1,0,0)\}\subset\mathbb S^2$ contains a single unit vector following the $x$-direction in the $3$-dimensional space and the open set $X=\{(x,0,0): x<0\}$, then $\conv_H X=\{(x,y,z): x\le 0\}$ but no finite subset $X'\subset X$ has $\conv_H X'$ containing the origin $0$. (A small remark is that $\conv_H X$ contains $\cl X$ since $H$-convex sets are closed.)

The condition on the closedness of $H$ is however for convenience only. The reason is that $\conv_H X=\conv_{\cl H} X$ for a closed $X$ (see \cite[Theorem $20.4$]{boltyanski2012excursions}).

The so-called \emph{Carath\'eodory number $k$ for $K$-strong convexity} is shown in \cite{bui2021caratheodory} to be at least the dimension of $K$ and the bound is attained for a certain class of convex bodies. On the other hand, the number can be $\infty$ when $K$ is a cone in $\mathbb R^3$ with the base being a disk. Another result \cite{karasev2001characterization} is that this number is at most $n+1$ for a generating convex body in $\mathbb R^n$, with some discussion in Section \ref{sec:relations}.

Although the Carath\'eodory number for strong convexity has been established for only a few certain cases and seems to be a hard problem in general, one may hope for a solution to the so-called \emph{Carath\'eodory number $h$ for $H$-convexity}. Boltyanski and Martini gave the following characterization.
\begin{theorem}[Boltyanski and Martini 2001 \cite{boltyanski2001caratheodory}]
    For a set $H\subseteq \mathbb S^{n-1}$ of normals that is not one-sided, we have the following conclusions 
    \begin{center}
        \begin{tabular}{|c|c|c|}
            \hline
             condition on $n$ & condition on $\md H$ & conclusion on $\ctd(H)$ \\ 
            \hline
             $n\ge 2$ & $\md H=1$ & $\ctd(H)=n-1$ \\ 
             $n=2$ & $\md H=2$ & $\ctd(H)=2$ \\  
             $n\ge 3$ & $\md H=2$ & $n-1\le\ctd(H)\le n^2-n-1$\\    
             $n\ge 4$ & $3\le \md H\le n-1$ & $n-1\le\ctd(H)\le \infty$ \\    
             $n\ge 3$ & $\md H=n$ & $n\le\ctd(H)\le \infty$ \\    
            \hline
        \end{tabular}
    \end{center}
    where $\md H$ is one less than the Helly number for $H$-convexity and $\ctd(H)$ is one less than the Carath\'eodory number for $H$-convexity.
\end{theorem}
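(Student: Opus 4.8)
The plan is to translate Problem~\ref{prob:main} for $H$-convexity into a purely combinatorial covering problem on the sphere, and then to read off each row of the table from the geometry of that covering. First I would fix the target point and, after a translation, assume it is the origin. Since the smallest halfspace with outer normal $a$ containing $X$ is $\{x:\langle a,x\rangle\le \sup_{y\in X}\langle a,y\rangle\}$, one has $0\in\conv_H X$ precisely when $\sup_{y\in X}\langle a,y\rangle\ge 0$ for every $a\in H$, and for a finite $X'$ this reads $\bigcup_{y\in X'}A_y=H$, where $A_y:=\{a\in H:\langle a,y\rangle\ge 0\}$ is the ``$H$-cap'' of the closed hemisphere centred at $y/\lvert y\rvert$. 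Thus $0\in\conv_H X'$ iff the caps $\{A_y\}_{y\in X'}$ cover $H$, and (a compactness argument, using that $H$ is closed and that $H$-convex sets are closed exactly as in the excerpt's example, reduces the possibly infinite and non-attaining situation to finite covers) the Carath\'eodory number equals the largest possible size of an \emph{irredundant} cover of $H$ by such caps, i.e.\ a cover in which every cap contains a point of $H$ lying in no other cap. This reformulation is the conceptual core of the whole argument.

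Granting it, each irredundant cover $A_{u_1},\dots,A_{u_m}$ yields \emph{private directions} $a_1,\dots,a_m\in H$ with $\langle a_i,u_i\rangle\ge 0$ and $\langle a_i,u_j\rangle<0$ for $j\ne i$; conversely any such sign pattern produces an irredundant cover of that size. I would then bound $m$ using the hypothesis on $\md H$, recalling that by Boltyanski's Helly theorem $\md H+1$ is the largest $k$ for which some $k$ vectors of $H$ form a simplex with the origin in its relative interior, i.e.\ the largest minimal positively dependent subset of $H$. The point is that a long list of private directions forces a large minimal positive dependency among the $a_i$: once $m>n$ there is a linear dependence among the $a_i$, and the sign constraints should let one refine it into a circuit whose size exceeds $\md H+1$, a contradiction. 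Carrying this out cleanly for $\md H=1$, where only antipodal dependencies are permitted, should pin $m$ down to $n$ and give $\ctd(H)=n-1$; the planar cases are small enough to inspect the covering picture directly, yielding $\ctd(H)=2$ for $n=2,\ \md H=2$.

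For the lower bounds I would exhibit explicit configurations. Since $H$ is not one-sided we have $\pos H=\mathbb R^n$, and a cross-polytope–type arrangement (modelled on the box normals $\pm e_1,\dots,\pm e_n$, using points $\mathbf 1-2e_j$) gives an irredundant cover of size $n$, hence $\ctd(H)\ge n-1$ in every row, with a sharpened variant realizing $\ctd(H)\ge n$ when $\md H=n$. For the rows that only assert $\ctd(H)\le\infty$ (namely $3\le\md H\le n-1$ and $\md H=n$) I would instead construct families of irredundant covers of unbounded size: a minimal positive dependency on $\ge 4$ vectors leaves enough room to chain arbitrarily many private directions without ever creating a forbidden small circuit, so the cover size cannot be bounded in terms of $n$ alone.

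The hard part is the finite upper bound $\ctd(H)\le n^2-n-1$ in the regime $n\ge 3,\ \md H=2$. Here the sign pattern must be analysed quantitatively: $\md H=2$ forbids any four vectors of $H$ from positively wrapping the origin, so the private directions admit only pairwise (antipodal) and triple circuits, and one must show that this restriction caps the number of simultaneously realizable private directions at $n^2-n-1$. I expect this extremal estimate — controlling how many ``one nonnegative, all others negative'' sign rows can coexist in $\mathbb R^n$ under the circuit restriction imposed by $\md H=2$ — to be the main obstacle, together with the matching constructions that separate the genuinely finite case $\md H=2$ from the infinite cases $\md H\ge 3$. The remaining entries then follow by assembling the lower bounds above with these bounds.
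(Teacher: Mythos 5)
First, a point of reference: the paper does not prove this statement at all --- it is quoted verbatim from Boltyanski and Martini \cite{boltyanski2001caratheodory} as background, so there is no internal proof to compare against. Your opening reformulation (a point lies in $\conv_H X'$ iff the caps $A_y=\{a\in H:\langle a,y\rangle\ge 0\}$ cover $H$, and minimal witnesses correspond to covers in which every cap owns a private normal) is correct and is precisely the machinery the paper sets up in Section \ref{sec:characterize} as Properties \eqref{covering} and \eqref{excluding} for its own Theorem \ref{thm:characterize}. But the rest of the proposal has genuine gaps.

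Two of them are structural rather than matters of missing detail. (a) Your ``conversely any such sign pattern produces an irredundant cover of that size'' is false as stated: the sign pattern $\langle a_i,u_i\rangle\ge 0$, $\langle a_i,u_j\rangle<0$ guarantees that no cap can be dropped, but it does not guarantee that the caps cover $H$ in the first place, i.e.\ it gives $0\notin\conv_H(X\setminus\{x_i\})$ but not $0\in\conv_H X$. Establishing the covering is exactly the delicate part of the paper's lower-bound argument (the maximal-$B$, minimal-distance step at the end of Section \ref{sec:characterize}), and your lower bounds for every row rely on this unproved converse. (b) Your central upper-bound mechanism --- ``once $m>n$ there is a linear dependence among the $a_i$, and the sign constraints should let one refine it into a circuit whose size exceeds $\md H+1$'' --- cannot work in general: the private normals $a_1,\dots,a_m$ may be in conical position (strictly separable from $0$, no positive dependence among them whatsoever), in which case no positive circuit exists to contradict a small $\md H$. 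This is not a corner case; it is exactly the regime responsible for the $\ctd(H)\le\infty$ entries in rows four and five, and it is the phenomenon the paper's cone number is introduced to capture. Without a separate idea for the positively independent case, none of the finite upper bounds in rows one through three follows. Finally, you explicitly leave open the bound $n^2-n-1$ for $\md H=2$, $n\ge 3$ and the constructions witnessing unboundedness, which are the substantive content of the cited theorem.
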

We use the notations $\md H$ and $\ctd(H)$ of \cite{boltyanski2001caratheodory} in this theorem only for easier reference. Note that the authors of \cite{boltyanski2001caratheodory} are a bit strict with constraining the set $H$ to be not one-sided to address the set of normals of convex bodies. However, it is a minor issue that does not affect the results.

Although the lower and upper estimates are exact in the sense that there are examples available, we do not quite have an idea of the value of the Carath\'eodory number when $n\ge 3$. Therefore, we give a more definite characterization by introducing the following definition.
\begin{definition}
The cone number of a set $H\subseteq \mathbb S^{n-1}$ is the maximal number of vectors in $H$ such that they are in \emph{conical position} (i.e., all points are strictly separable from $0$ and no point is in the positive hull of the rest) and their positive hull is free of any remaining vector of $H$. If there exist arbitrarily many such vectors, the cone number is defined to be $\infty$.
\end{definition}

If $H$ is finite, the cone number can be interpreted as the maximal degree of a vertex in a polyhedron with the set of normals $H$.

The Carath\'eodory number for $H$-convexity can be characterized in the following main result of the article.
\begin{theorem} \label{thm:characterize}
  The Carath\'eodory number for $H$-convexity is the maximum of two parameters: the Helly number for $H$-convexity and the cone number of $H$.
\end{theorem}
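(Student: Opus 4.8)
The plan is to recast membership in the $H$-convex hull as a covering problem on the sphere and then dissect a minimal witness. After translating the target point to the origin, the hull is described concretely by $\conv_H X=\bigcap_{a\in H}\{x:\langle a,x\rangle\le \sup_{y\in X}\langle a,y\rangle\}$, so that
\[
0\in\conv_H X \iff \text{for every } a\in H,\ \sup_{y\in X}\langle a,y\rangle\ge 0 .
\]
Finding a small $X'\subseteq X$ with $0\in\conv_H X'$ is then exactly the task of covering all directions of $H$ by the closed hemispheres $G_x=\{a\in H:\langle a,x\rangle\ge 0\}$, $x\in X'$. Thus the Carath\'eodory number is the worst-case size of an inclusion-minimal hemisphere subcover, and I would prove the two inequalities $\text{Carath\'eodory}\ge\max(\text{Helly},\text{cone})$ and $\text{Carath\'eodory}\le\max(\text{Helly},\text{cone})$ separately.

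For the lower bounds I would argue the two parameters independently. That the Carath\'eodory number is at least the Helly number follows from the general principle (established elsewhere in the paper) that Carath\'eodory dominates Helly whenever all convex sets are ordinarily convex, which applies since $H$-convex sets are intersections of halfspaces. For the cone number I would take vectors $a_1,\dots,a_m\in H$ in conical position with $\pos\{a_1,\dots,a_m\}\cap H=\{a_1,\dots,a_m\}$, pass to the dual cone $C=\{x:\langle a_i,x\rangle\le 0\ \forall i\}$, place one point $y_i$ in the relative interior of each facet of $C$ so that $\langle a_i,y_i\rangle=0$ while $\langle a_j,y_i\rangle<0$ for $j\ne i$, and adjoin the deep interior points $\{z\in C:\langle a_i,z\rangle\le -1\ \forall i\}$. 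The freeness condition is exactly what guarantees that no direction of $H$ lies strictly inside $C$, so the only in-cone directions are the $a_i$ themselves (each covered by its $y_i$), while the out-of-cone directions are covered by the deep points; hence $0\in\conv_H X$, yet every $a_i$ is covered by $y_i$ alone, so deleting any single $y_i$ uncovers $a_i$ and no subset of size $m-1$ suffices.

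The heart of the matter is the upper bound. I would take $X_0=\{x_1,\dots,x_k\}$ with $0\in\conv_H X_0$ of minimum cardinality; by minimality each $x_i$ has a private direction $a_i\in H$ that it alone covers, yielding the sign pattern $\langle a_i,x_i\rangle\ge 0$ and $\langle a_i,x_j\rangle<0$ for $j\ne i$. I would then split on whether $0\in\conv\{a_1,\dots,a_k\}$. In this affine case one writes $0=\sum_i\lambda_i a_i$ with $\lambda_i\ge 0$ and $\sum_i\lambda_i=1$; dotting this relation with each $x_j$ and using the sign pattern shows that a vanishing coefficient is impossible, so $0$ is a strictly positive combination of the $a_i$, and applying the same device to the one-parameter family obtained from a hypothetical affine dependence rules out affine dependence altogether. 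Hence $a_1,\dots,a_k$ are the vertices of a simplex containing $0$ in its relative interior, and Boltyanski's Helly theorem forces $k\le\text{Helly number}$. In the complementary case $0\notin\conv\{a_1,\dots,a_k\}$ the $a_i$ are strictly separated from $0$, and dotting a hypothetical relation $a_i=\sum_{\ell\ne i}c_\ell a_\ell$ with $x_i$ shows no $a_i$ lies in the positive hull of the others; thus the $a_i$ are in conical position, and I would aim for $k\le\text{cone number}$.

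The main obstacle is precisely this last step: the witness directions need not satisfy the freeness requirement $\pos\{a_i\}\cap H=\{a_i\}$ from the definition of the cone number, since some further $a'\in H$ may lie in their positive hull, in which case $\{a_i\}$ is not a configuration that the cone number counts. Overcoming this — by refining the choice of witness and of the private directions $a_i$ so that the generated cone carries no extra normal of $H$ (e.g.\ choosing it inclusion-minimal), or by showing that any offending $a'$ spawns a conical free configuration at least as large — is where the real work lies. A parallel technical point is the reduction from an arbitrary closed, possibly unbounded, $X$ to a finite witness: when a supremum $\sup_{y\in X}\langle a,y\rangle=0$ is not attained the hemispheres $G_x$ fail to cover $H$, so one must either extract a finite cover by compactness when the cone number is finite, or produce arbitrarily large conical free configurations when no finite witness exists, thereby forcing the cone number (and the Carath\'eodory number) to be infinite, as in the cone-over-a-disk example.
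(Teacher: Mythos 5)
There is a genuine gap, and you have located it yourself: in the second case of the upper bound you obtain private normals $a_1,\dots,a_k$ in conical position, but you do not show that their positive hull is free of other normals of $H$, which is exactly what the definition of the cone number requires. Neither of your two suggested repairs is workable as stated: an ``inclusion-minimal'' choice of cone is not available (the $a_i$ are forced on you by the minimality of the witness $X$, up to the freedom of which exclusive normal to pick for each $x_i$, and that freedom alone does not kill an offending $a'$), and an offending $a'\in\pos\{a_1,\dots,a_k\}$ does not in any evident way spawn a free conical configuration of size $\ge k$. The idea that actually closes this gap in the paper is a perturbation of the witness, not of the normals: fixing a vector $\vec{n}$ with $\langle \vec{n},a\rangle>0$ on all the private normals, one replaces each $x_j$ by $x_j-\alpha\vec{n}$ with $\alpha\ge 0$ as large as possible subject to the halfspace $\{a:\langle a,x_j\rangle\ge 0\}$ still exclusively containing some point of $H$; by closedness of $H$ the extremal $\alpha$ is attained by a normal $\hat a$ lying exactly on the hyperplane, so after renaming one gets the normalized sign pattern $\langle a_{f(j)},x_j\rangle=0$ and $\langle a_{f(j)},x_{j'}\rangle<0$ for $j'\ne j$, while the covering property is preserved. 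Freeness is then immediate: any $a'=\sum_j\lambda_j a_{f(j)}$ with at least two positive coefficients satisfies $\langle a',x_i\rangle<0$ for \emph{every} $i$, so $a'$ would be uncovered, contradicting $0\in\conv_H X$. Without this normalization the exclusive normals only satisfy $\langle a_{f(j)},x_j\rangle\ge 0$, the terms of mixed sign in $\langle a',x_i\rangle$ cannot be controlled, and the bound $k\le{}$cone number does not follow.

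Two smaller remarks. Your lower-bound construction for the cone number (facet points $y_i$ of the dual cone plus the closed set of ``deep'' points $\{z:\langle a_i,z\rangle\le-1\ \forall i\}$) is a genuinely different and arguably cleaner route than the paper's, which uses only the $m$ points $y_i$ and must then prove the covering of $H$ by a minimal-distance argument relying on the maximality of the free configuration; your deep points cover everything outside $\pos\{a_1,\dots,a_m\}$ directly and freeness handles the inside, and the justification that each $y_i$ exists (irredundancy of each inequality, via Farkas, from conical position) goes through. But this does not compensate for the unproved half of the upper bound, which is the heart of the theorem. Finally, the reduction from a closed $X$ to a finite minimal witness, which you flag, is indeed needed before the ``private normal'' bookkeeping can even begin, so it should be carried out rather than deferred.
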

Note that if we do not assume $H$ to be closed as in the statement of Problem \ref{prob:main}, the Helly number is still the same for both $H$-convexity and $(\cl H)$-convexity (see \cite[Theorems $17.3$ and $22.1$]{boltyanski2012excursions}) but the cone number may be different for $H$ and $\cl H$. Indeed, while $\mathbb S^2$ is the closure of $(\mathbb S^2\setminus\pos\{(1,0,0),(0,1,0)\})\cup\{(1,0,0),(0,1,0)\}$, their cone numbers are respectively $1$ and $2$. 

The fact that the Helly number for $H$-convexity is a lower bound on the Carath\'eodory number for $H$-convexity is already observed by Boltyanski and Martini in \cite[Theorem $1$]{boltyanski2001caratheodory}. It is actually a case of Theorem \ref{thm:cara>helly}. The main merit of Theorem \ref{thm:characterize} is the introduction of the cone number, which finishes the characterization. The notion is not artificial, as we can easily use it to prove related results as in Section \ref{sec:relations}. The proof of Theorem \ref{thm:characterize} in Section \ref{sec:characterize}, although being technical, is not overly complicated, due to the simplicity of the concepts. 

It should be noticed that more works are still to be done, as this is only an ``algebraic characterization'' of the Carath\'eodory number. A similar scenerio is that while the Helly number for $H$-convexity can be decribed as simple as the maximal number of points in $H$ being the vertices of a simplex whose relative interior contains $0$, a more thorough geometric description of the convex bodies with the set of normals $H$ is largely open, and known under the name ``the Sz\H{o}kefalvi--Nagy problem'' \cite{boltyanski2012excursions}. We write the problem for the Carath\'eodory number as below.
\begin{problem}
\label{prob:geometric-char}
    Give a geometrical description of the vector system $H$ (or those polyhedra with the set of normal $H$) so that the Carath\'eodory number is some given number $r$.
\end{problem}

Whether Problem \ref{prob:geometric-char} heavily relies on a solution of the Sz\H{o}kefalvi--Nagy problem is not so obvious. Suppose we restrict ourselves to one-sided $H$, for which the polyhedra are polytopes. One can see that the cone number is at least $n$ while the Helly number is at most $n+1$. Therefore, the Helly number can decide the Carath\'eodory number only when the Helly number is $n+1$ and the cone number is $n$. 
Note that the cone number $n$ is equivalent to the condition that if $n+1$ normals are in conical position, then their positive hull contain another normal from $H$. The polytopes with such condition were in turn shown to be exactly the monotypic polytope in \cite[Theorem $1.5$]{bui2025every} (the monotypic polytopes were defined in \cite{mcmullen1974monotypic}).
Note that monotypic polytopes include strongly monotypic polytopes, whose sets of normals do not contain $n+1$ normals in conical position, see \cite[Theorem $1.6$]{bui2025every}. Unfortunately, even for this subclass of strongly monotypic polytopes, the characterization is already nontrivial, see \cite{borowska2008strongly} for an attempt in $\mathbb R^3$. In conclusion, one can follow the roadmap below to attempt to solve Problem \ref{prob:geometric-char}, none of the steps is an easy task:
\begin{itemize}[itemsep=0pt]
	\item Characterize monotypic polytopes together with their Helly numbers.
	\item For those non-monotypic polytopes, characterize the cone numbers.
\end{itemize}

Those who prefer to read the proof of the main result, Theorem \ref{thm:characterize}, may go to Section \ref{sec:characterize}. Otherwise, we proceed to the next section, which relates the main result to others.

\section{Connections between the Carath\'eodory numbers for strong convexity, $H$-convexity, and related notions}
\label{sec:relations}
As a relaxed version of strong convexity, $H$-convexity introduces the following bound.
\begin{theorem}
\label{thm:lower-bound}
For a convex body $K$ with the set of normals $H$, the Carath\'eodory number for $K$-strong convexity is at least the Carath\'eodory number for $H$-convexity.
\end{theorem}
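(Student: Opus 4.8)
The plan is to not transfer an $H$-convex witness directly, because the inclusion runs the wrong way. Since $K=\bigcap_{a\in H}\{x:\langle a,x\rangle\le h_K(a)\}$, every translate of $K$, and hence every $K$-strongly convex set, is $H$-convex; therefore $\conv_K X\supseteq\conv_H X$ and, for every subset $X'$, $\conv_K X'\supseteq\conv_H X'$. A point escaping all small $H$-hulls thus need not escape the larger $K$-hulls, and a naive ``flatten $K$ by scaling'' argument fails outright: scaling does not flatten a vertex of $K$, so $\conv_{\lambda K}$ need not converge to $\conv_H$ (already for a square $K$ the translates of $\lambda K$ clamp down to the bounding box rather than to the ordinary hull). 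Instead I would use the characterization in Theorem \ref{thm:characterize} and prove the two lower bounds $\mathrm{Cara}_K\ge\mathrm{Helly}$ and $\mathrm{Cara}_K\ge\mathrm{cone}(H)$ separately, where $\mathrm{Cara}_K$ is the Carath\'eodory number for $K$-strong convexity.

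The Helly bound is immediate. The $K$-strongly convex sets are ordinarily convex, being intersections of translates of the convex body $K$, so the principle that the Carath\'eodory number dominates the Helly number in any convex structure of ordinarily convex sets (Theorem \ref{thm:cara>helly}) gives $\mathrm{Cara}_K\ge\mathrm{Helly}_K$. As recalled in the introduction, the Helly numbers for $H$-convexity and for $K$-strong convexity coincide, so $\mathrm{Cara}_K\ge\mathrm{Helly}$.

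For the cone bound I would pass to the exact dual description of the $K$-hull. Writing $T(Z)=\bigcap_{z\in Z}(z-K)$ for the set of admissible translation vectors, one has $Z\subseteq K+t\iff t\in T(Z)$, and hence
\[
  P\in\conv_K Z\iff T(Z)\subseteq P-K.
\]
Let $a_1,\dots,a_m\in H$ realize the cone number: they are in conical position and $\pos\{a_1,\dots,a_m\}$ contains no other vector of $H$. For each $i$ pick $y_i\in\partial K$ with outer normal $a_i$; then the translate $y_i-K$ touches a common pinch point with outer normal $-a_i$, so there it is the halfspace $\{t:\langle a_i,t\rangle\ge0\}$, and $\bigcap_i(y_i-K)$ is locally the cone $\{t:\langle a_i,t\rangle\ge0\ \forall i\}$. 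I would then perturb the points $X_i$ off the $y_i$ so that $T(\{X_1,\dots,X_m\})$ is a bounded region pinched by exactly the $m$ facet-directions $a_1,\dots,a_m$ and contained in a single translate $P-K$, while deleting any $X_j$ removes the $j$th binding facet and enlarges $T(\cdot)$ past $P-K$. This yields $P\in\conv_K\{X_i\}$ but $P\notin\conv_K(\{X_i\}_{i\ne j})$ for every $j$, so no $m-1$ of the points suffice and $\mathrm{Cara}_K\ge m=\mathrm{cone}(H)$. Combining with Theorem \ref{thm:characterize} gives $\mathrm{Cara}_K\ge\max(\mathrm{Helly},\mathrm{cone}(H))=\mathrm{Cara}_H$.

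The main obstacle is the cone construction, specifically verifying that all $m$ constraints are simultaneously active and each is necessary. Here both defining properties of the cone number are used: conical position makes the $m$ binding constraints one-sided, so they can be the facet-directions meeting at a single vertex-type pinch of the intersection, while the condition that $\pos\{a_1,\dots,a_m\}$ contains no further normal of $H$ guarantees that no other facet-direction of $K$ supplies a spurious supporting constraint near the pinch. This last point is exactly what forces all $m$ points to be needed and what makes the deletion of any single $X_j$ genuinely free the intersection. Because the pinch is a purely local phenomenon, the boundedness of $K$ causes no trouble: only the local structure of $\partial K$ at the normals $a_i$ enters, so one must take care that the perturbation keeps the $X_i$ distinct and the $m$ facets jointly incident to the pinch even when the $y_i$ coincide at a genuine vertex of $K$.
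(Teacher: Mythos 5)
Your decomposition into a Helly bound and a cone bound is legitimate in principle (Theorem \ref{thm:characterize} is proved independently, and the Helly half does follow from Theorem \ref{thm:cara>helly} together with Boltyanski's coincidence of the Helly numbers for $H$-convexity, for translates of $K$, and hence for $K$-strong convexity). But the cone half is where the theorem actually lives, and there you have a plan rather than a proof. The hard direction is $P\in\conv_K\{X_1,\dots,X_m\}$, i.e.\ that \emph{every} translate of $K$ containing all the $X_i$ contains $P$; equivalently $\bigcap_i(X_i-K)\subseteq P-K$. This is a global condition: $\bigcap_i(X_i-K)$ is shaped by all of $\partial K$, not only by its local structure near points with normals $a_1,\dots,a_m$, so the claim that ``only the local structure of $\partial K$ at the normals $a_i$ enters'' is unjustified unless the configuration $\{X_i\}$ is made small relative to $K$ and positioned carefully --- a scaling you never introduce. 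Even granting such a localization, verifying that the $m$ constraints $\langle a_i,\cdot\rangle\ge 0$ are the only active ones and jointly trap $P$ is exactly the covering statement that the paper establishes in Section \ref{sec:characterize} via the minimal-distance argument (that the halfspaces $\{a:\langle a,x_i\rangle\ge 0\}$ cover all of $H$); the hypothesis that $\pos\{a_1,\dots,a_m\}$ contains no further normal does not yield this by itself without the maximality of the cone configuration and an argument. Your sketch also speaks of facets and vertices, whereas the theorem concerns an arbitrary convex body $K$, where the $a_i$ may be attained at non-smooth points or at widely separated boundary points. You acknowledge this step as ``the main obstacle,'' and it is: as stated, the proof is incomplete.

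Separately, you dismiss the scaling route too quickly; it is in fact the paper's proof, and it handles both cases uniformly. One does not need $\conv_{\lambda K}\to\conv_H$. Given a finite $H$-convexity witness $X$ (so $0\in\conv_H X$ but $0\notin\conv_H(X\setminus\{x_i\})$ for each $i$), each deleted subset is strictly separated from $0$ by a hyperplane with normal in $H$; placing $K$ so that this hyperplane supports it at a \emph{regular} boundary point and taking the homothety about that point with a sufficiently large ratio yields a translate of $\lambda K$ containing $X\setminus\{x_i\}$ but not $0$, since the hyperplane still supports the homothety. Combined with the inclusion $\conv_H X\subseteq\conv_{\lambda K}X$ (which you yourself note), the same $X$ witnesses the Carath\'eodory number for $\lambda K$, which equals that for $K$ by scale invariance. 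This sidesteps your construction entirely and needs only finitely many specific separations, not convergence of hulls.
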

A proof given in Section \ref{sec:lower-bound} uses an observation that the more we scale down a point set, the less different its convex hulls for $K$-strong convexity and $H$-convexity are. Thinking differently but still of the same nature, we keep the point set and scale up $K$. When we increase the radius of a ball $K$, the $K$-strongly convex hull approaches the ordinary convex hull, while the $H$-convex hull is identical to the ordinary convex hull (we assume the point set is closed). 

Note that the lower bound provided by the Carath\'eodory number for $H$-convexity is at least as good as the known bound by the dimension $n$ of $K$ (the latter bound was given in \cite[Theorem $1.8$]{bui2021caratheodory}). The readers may notice that the technique in the proof of Theorem \ref{thm:lower-bound} can be used for the proof in \cite{bui2021caratheodory} instead of the argument with John's ellipsoid. In particular, several separating hyperplanes will be used instead of a single separating ball. In \cite[Section $5$]{bui2021caratheodory}, the cone was provided as an example with an infinite Carath\'eodory number for $K$-strong convexity. A more important consequence of this work is now we have a class of examples with an infinite Carath\'eodory number for $K$-strong convexity: convex bodies $K$ whose set of normals $H$ has an infinite cone number.

The number of facets of a polytope $K$ can be seen as a trivial upper bound on the Carath\'eodory number for $K$-strong convexity. One can slightly improve the bound by combining it with the Carath\'eodory number for $H$-convexity.
\begin{theorem}
\label{thm:trivial-upper-bound}
For a polytope $K$, if $k$ is the Carath\'eodory number for $K$-strong convexity and $h$ is the Carath\'eodory number for $H$-convexity where $H$ is the set of normal vectors of $K$, then
\[
    k\le \max\{h, |H|-1\}.
\]
\end{theorem}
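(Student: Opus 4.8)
The plan is to translate the problem into the language of admissible translation vectors and then argue by a dichotomy. Write $K=\{x:\langle a_i,x\rangle\le b_i,\ i=1,\dots,m\}$ with $H=\{a_1,\dots,a_m\}$ and $m=|H|$, and for a compact set $Y$ put $s_i(Y)=\max_{x\in Y}\langle a_i,x\rangle$; the maximum is attained because $Y$ lies in a translate of the bounded polytope $K$, hence is compact. A translate $K+t$ contains $Y$ exactly when $t$ lies in the polyhedron $T(Y)=\{t:\langle a_i,t\rangle\ge s_i(Y)-b_i,\ i=1,\dots,m\}$, and a point $p$ lies in $K+t$ exactly when $t\in T_p=\{t:\langle a_i,t\rangle\ge\langle a_i,p\rangle-b_i,\ i=1,\dots,m\}$. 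Consequently $p\in\conv_K Y$ if and only if $T(Y)\subseteq T_p$. Since $T(Y)$ depends on $Y$ only through the numbers $s_i(Y)$, I would first replace $X$ by the set $X_0=\{x_1^*,\dots,x_m^*\}$ of facet maximizers, where $\langle a_i,x_i^*\rangle=s_i(X)$; then $s_i(X_0)=s_i(X)$ for every $i$, so $T(X_0)=T(X)$ and $p\in\conv_K X_0$ with $|X_0|\le m$.

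Now I would split into two cases. If $p\in\conv_H X$, then by the Carath\'eodory theorem for $H$-convexity there is a subset $X'\subseteq X$ with $|X'|\le h$ and $p\in\conv_H X'$; since every translate of $K$ is $H$-convex we have $\conv_H X'\subseteq\conv_K X'$, whence $p\in\conv_K X'$ and we are done with at most $h$ points. Otherwise $p\notin\conv_H X$, and since $\conv_H X=\{x:\langle a_i,x\rangle\le s_i(X)\ \forall i\}$ there is a facet index $l_0$ with $\langle a_{l_0},p\rangle>s_{l_0}(X)=s_{l_0}(X_0)$. The claim is then that the single point $x_{l_0}^*$ may be discarded from $X_0$ without losing $p$, leaving at most $m-1$ points.

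The heart of the argument is this discarding step, and it rests on the observation that the $l_0$-th defining inequality of $T(X_0)$ is never tight. Writing $c_i=s_i(X_0)-b_i$ and $d_i=\langle a_i,p\rangle-b_i$, the hypothesis $p\in\conv_K X_0$ gives $T(X_0)\subseteq T_p$, so every $t\in T(X_0)$ satisfies $\langle a_{l_0},t\rangle\ge d_{l_0}>c_{l_0}$. Discarding $x_{l_0}^*$ only relaxes the $l_0$-th threshold from $c_{l_0}$ to some $c_{l_0}'\le c_{l_0}$ and leaves the other thresholds unchanged, producing an enlarged polyhedron $T'$. To see $T'\subseteq T_p$, take $t^*\in T'$ and suppose $\langle a_{l_0},t^*\rangle<c_{l_0}$; joining $t^*$ to any $t_0\in T(X_0)$ by a segment, all of whose points satisfy the unchanged inequalities, an intermediate-value argument produces a point $t'$ on the segment with $\langle a_{l_0},t'\rangle=c_{l_0}$ that satisfies every defining inequality of $T(X_0)$, hence $t'\in T(X_0)$; but then $\langle a_{l_0},t'\rangle\ge d_{l_0}>c_{l_0}$, a contradiction. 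Thus every $t^*\in T'$ already lies in $T(X_0)\subseteq T_p$, so $T'\subseteq T_p$ and $p\in\conv_K(X_0\setminus\{x_{l_0}^*\})$. In either case $p$ is recovered from at most $\max\{h,m-1\}$ points, which is what we want. I expect the main obstacle to be precisely this never-tight (redundancy) step; once the translate-to-polyhedron dictionary is in place it is a short convexity argument, and the only mild technical point is the attainment of the suprema $s_i$, which is guaranteed by compactness of $X$.
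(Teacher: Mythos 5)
Your proof is correct and reaches the same dichotomy as the paper ($p\in\conv_H X$ versus not), but the second case is handled by a genuinely different mechanism. The paper proves a lemma by a sliding-translate argument: for a minimal witness set $X$, each $x\in X$ can be assigned a normal $a\in H$ for which $x$ is the strict unique maximizer over $X$ and $\langle a,x\rangle\ge\langle a,p\rangle$; since this assignment is injective and the normal separating $p$ from $\conv_H X$ can never be assigned, $|X|\le|H|-1$ by counting. You instead work in the space of admissible translation vectors, encode $\conv_K$ membership as the polyhedral inclusion $T(Y)\subseteq T_p$, pass to the set $X_0$ of facet maximizers, and show that the facet witnessing $p\notin\conv_H X$ yields a never-tight, hence redundant, inequality of $T(X_0)$, so the corresponding maximizer can be dropped. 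The two arguments are dual views of the same structure (your $X_0$ is essentially the image of the paper's assignment), but yours avoids the minimality reduction and the geometric sliding step, replacing them with a short linear-programming-style redundancy argument; the paper's version is more geometric and its lemma is reusable as a standalone statement. One small point to patch: your claim that discarding $x_{l_0}^*$ ``leaves the other thresholds unchanged'' requires the maximizers $x_1^*,\dots,x_m^*$ to be pairwise distinct, since if $x_{l_0}^*=x_i^*$ for some $i\ne l_0$ then $s_i$ may drop as well. This is harmless --- in that case $|X_0|\le m-1$ already and no discarding is needed --- but the case split should be stated.
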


Combining the theorems, one obtains the following result for pyramids.\footnote{A pyramid in $\mathbb R^n$ with respect to a convex body $K'\in\mathbb R^{n-1}$ is the convex hull of $K'\cup\{p\}$ for a point $p$ not in the space of $K'$.}
\begin{corollary}
\label{cor:pyramids}
If $K\subset\mathbb R^n$ has the same set of normals as a pyramid that is not a simplex, then the Carath\'eodory number for $K$-strong convexity is the number of relative facets of the base.
\end{corollary}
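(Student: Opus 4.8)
The plan is to combine Theorems~\ref{thm:lower-bound}, \ref{thm:trivial-upper-bound}, and~\ref{thm:characterize} to pin down $k$ exactly, by computing the two parameters of Theorem~\ref{thm:characterize} for the normal set $H$. Write $K'$ for the base and let $m$ be its number of relative facets. Since a pyramid is a simplex if and only if its base is a simplex, the hypothesis that the pyramid is not a simplex forces $K'$ to be a non-simplicial $(n-1)$-polytope, so $m\ge n+1$. The pyramid then has exactly $m+1$ facets, namely the base together with one side facet per relative facet of $K'$, so $H=\{n_0,n_1,\dots,n_m\}$ with $|H|=m+1$, where $n_0$ is the base normal. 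As $K$ shares this finite normal set and is a convex body, it is a polytope with $m+1$ facets, so Theorem~\ref{thm:trivial-upper-bound} is applicable.

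First I would show that the cone number of $H$ equals $m$. For the lower bound, observe that the apex is a vertex of the pyramid and that the facets containing it are precisely the $m$ side facets; hence their normals $n_1,\dots,n_m$ are the extreme rays of the normal cone at the apex, which is pointed and full-dimensional. The extreme rays of a pointed cone are strictly separable from $0$ and mutually irredundant, so $n_1,\dots,n_m$ are in conical position. Moreover $n_0$ cannot lie in this normal cone, since otherwise the apex would maximize $\langle n_0,\cdot\rangle$ and therefore lie on the base facet; thus $\pos\{n_1,\dots,n_m\}$ is free of $n_0$, and these $m$ vectors witness cone number $\ge m$. For the upper bound, the normals of a bounded body positively span $\mathbb R^n$, so $H$ itself is not strictly separable from $0$; any conical-position subset consequently omits at least one normal and has at most $m$ elements. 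Hence the cone number is exactly $m$.

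Next I would bound the Helly number. By Boltyanski's characterization it equals the largest number of vectors of $H$ that are vertices of a simplex with $0$ in its relative interior, and a simplex in $\mathbb R^n$ has at most $n+1$ vertices; therefore the Helly number is at most $n+1\le m$. Combining this with the previous step, Theorem~\ref{thm:characterize} yields that the Carath\'eodory number $h$ for $H$-convexity equals $\max\{\text{Helly},\,\text{cone}\}=m$.

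Finally I would assemble the sandwich. Theorem~\ref{thm:lower-bound} gives $k\ge h=m$, while Theorem~\ref{thm:trivial-upper-bound} gives $k\le\max\{h,|H|-1\}=\max\{m,m\}=m$; hence $k=m$, the number of relative facets of the base. The only genuinely geometric step is the cone-number lower bound, i.e.\ verifying that the $m$ side normals form the extreme rays of the apex's normal cone and that $n_0$ avoids their positive hull; everything else is bookkeeping with the already-established theorems together with the bound $m\ge n+1$ coming from the non-simplex hypothesis.
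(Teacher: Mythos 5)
Your proposal is correct and follows essentially the same route as the paper: compute the two parameters of Theorem~\ref{thm:characterize} to get that the Carath\'eodory number for $H$-convexity equals the number $m$ of relative facets of the base (equivalently $|H|-1$), then sandwich $k$ between Theorem~\ref{thm:lower-bound} and Theorem~\ref{thm:trivial-upper-bound}. The paper states the computation of the $H$-convexity Carath\'eodory number as a one-line appeal to Theorem~\ref{thm:characterize}, whereas you spell out the cone-number and Helly-number estimates (including the use of $m\ge n+1$ from the non-simplex hypothesis), which is a welcome elaboration rather than a different argument.
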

\begin{proof}
By the characterization in Theorem \ref{thm:characterize}, the Carath\'eodory number for $H$-convexity is the number of relative facets of the base. Therefore, by the lower bound in Theorem \ref{thm:lower-bound}, the Carath\'eodory number for strong convexity is at least $|H|-1$. By the upper bound in Theorem \ref{thm:trivial-upper-bound}, this Carath\'eodory number is precisely $|H|-1$.
\end{proof}

Note that when the base is an $(n-1)$-simplex, the considered pyramid is actually an $n$-simplex, which is known to have the Carath\'eodory number $n+1$ for strong convexity, as one can easily verify. It turns out that pyramids (including simplices) have the same Carath\'eodory number for both strong convexity and $H$-convexity. However, the $K$-strongly convex hull and the $H$-convex hull for an arbitrary point set are in general different (e.g., the base of the pyramid is a regular pentagon and the point set contains two points attaining the diameter of the base). In fact, $\conv_H X\subseteq \conv_K X$. The equality is attainable, e.g. for cubes regardless of the set $X$.

The difference between the Carath\'eodory number for $K$-strong convexity and the number of facets of $K$ can vary. They are equal for simplices, as easily verified. In fact, simplices are the only polytopes with the Carath\'eodory number equal to the number of facets.
\begin{corollary}
\label{cor:simplices}
Simplices are the only polytopes with the Carath\'eodory number for strong convexity equal to the number of facets.
\end{corollary}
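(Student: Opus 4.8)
The plan is to play the three quantitative results of this section off against each other and extract a sharp constraint. Suppose $K$ is a full-dimensional polytope whose Carath\'eodory number for strong convexity $k$ equals its number of facets, which (each facet carrying a distinct outer normal) is exactly $|H|$. By Theorem \ref{thm:lower-bound} we have $h\le k$, where $h$ is the Carath\'eodory number for $H$-convexity, and by Theorem \ref{thm:trivial-upper-bound} we have $k\le\max\{h,|H|-1\}$. Since $k=|H|>|H|-1$, the maximum on the right must be realized by $h$, forcing $h\ge|H|=k\ge h$, and hence $h=|H|$.

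Next I would feed $h=|H|$ into Theorem \ref{thm:characterize}, which identifies $h$ as the larger of the Helly number and the cone number of $H$. The key step, and the only genuinely geometric one, is to observe that the cone number is \emph{strictly} smaller than $|H|$ for any bounded polytope. Indeed, attaining the cone number $|H|$ would require all of $H$ to be in conical position, in particular strictly separable from the origin by some direction $c$ with $\langle a,c\rangle>0$ for every $a\in H$. But then $-c$ is a common recession direction, since $\langle a,-c\rangle<0$ for all $a\in H$ means $x_0-tc\in K$ for every $x_0\in K$ and every $t\ge 0$, contradicting boundedness of $K$. Thus the cone number is at most $|H|-1$, and the equality $h=|H|$ can only come from the Helly number, so the Helly number for $H$-convexity equals $|H|$.

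Finally I would close the loop with two standard polytope facts. The Helly number for $H$-convexity never exceeds $n+1$, while a full-dimensional polytope in $\mathbb R^n$ has at least $n+1$ facets; the chain $|H|=\text{(Helly number)}\le n+1\le|H|$ then collapses to $|H|=n+1$, and the only full-dimensional polytope in $\mathbb R^n$ with $n+1$ facets is a simplex. For the converse direction one invokes the already-recorded fact that an $n$-simplex has Carath\'eodory number $n+1$ for strong convexity, which is its number of facets. I expect the main obstacle to be expository rather than mathematical, namely chaining the inequalities from Theorems \ref{thm:lower-bound} and \ref{thm:trivial-upper-bound} in the correct direction; the genuinely new content, that conical position is incompatible with the normals of a bounded body positively spanning the space, reduces to the one-line separation argument above.
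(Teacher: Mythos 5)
Your proposal is correct and follows essentially the same route as the paper: combine Theorems \ref{thm:lower-bound} and \ref{thm:trivial-upper-bound} to force $h=k=|H|$, then use Theorem \ref{thm:characterize} to rule out the cone number (which cannot reach $|H|$ for a bounded polytope) and conclude that the Helly number equals $|H|$, which happens only for a simplex. Your version merely spells out two steps the paper leaves implicit — the recession-direction argument showing all of $H$ cannot be in conical position, and the chain $|H|=\text{Helly}\le n+1\le|H|$ identifying the simplex — both of which are sound.
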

\begin{proof}
By Theorems \ref{thm:lower-bound} and \ref{thm:trivial-upper-bound}, if the Carath\'eodory number for strong convexity is the same as the number of facets, then the Carath\'eodory number for $H$-convexity is equal to the Carath\'eodory number for $K$-strong convexity. By Theorem \ref{thm:characterize}, the Carath\'eodory number for $H$-convexity is the maximum of two numbers. The latter number, the highest vertex degree of a polytope taking $H$ as the set of normals, cannot be the same as the number of facets. Therefore, the former number, the maximum number of points in $H$ being the vertices of a simplex containing $0$ in its relative interior, must be the same as the number of facets. It happens only when the considered polytope is a simplex.
\end{proof}

Giving an extra facet to a simplex does not increase the Carath\'eodory number for strong convexity, which is again a corollary of the same type.
\begin{corollary}
\label{cor:simplices-with-extra-bounding-facet}
For a polytope $K$ obtained from bounding an $n$-simplex by an extra facet, the Carath\'eodory number for $K$-strong convexity is $n+1$.
\end{corollary}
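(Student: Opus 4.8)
The plan is to reduce the statement entirely to the three main theorems, with $H$ denoting the set of normals of $K$. First I would do the bookkeeping on the facet count: an $n$-simplex has $n+1$ facets, and bounding it by one extra facet means intersecting it with a new halfspace that cuts off a corner region while leaving all the original facets present. This yields a polytope with $n+2$ facets whose normals are distinct, so $|H|=n+2$ and hence $|H|-1=n+1$. The remaining work is to compute the Carath\'eodory number $h$ for $H$-convexity via Theorem~\ref{thm:characterize} and then sandwich $k$ between Theorems~\ref{thm:lower-bound} and~\ref{thm:trivial-upper-bound}.

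To evaluate $h=\max\{\text{Helly number},\text{cone number}\}$, I would treat the two parameters separately. For the Helly part, the $n+1$ normals $a_0,\dots,a_n$ of the underlying simplex remain members of $H$, and by the standard property of simplex normals they are the vertices of an $n$-simplex whose interior contains $0$; since $\mathbb R^n$ admits no simplex with more than $n+1$ vertices, the Helly number for $H$-convexity is exactly $n+1$. For the cone number, I would argue it cannot reach $n+2$: any family of vectors in conical position must in particular be strictly separable from $0$, i.e. admit a functional $c$ with $\langle c,a\rangle>0$ on all of them. But the normal set of any polytope positively spans $\mathbb R^n$ (otherwise $K$ would be unbounded), so no such $c$ can work for the whole of $H$; thus the cone number is at most $|H|-1=n+1$. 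Plugging both into Theorem~\ref{thm:characterize} gives $h=\max\{n+1,\text{cone number}\}=n+1$.

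The final step is the sandwich. Theorem~\ref{thm:lower-bound} gives $k\ge h=n+1$, while Theorem~\ref{thm:trivial-upper-bound} gives $k\le\max\{h,|H|-1\}=\max\{n+1,n+1\}=n+1$, so $k=n+1$, as claimed. The only step that is not pure bookkeeping is the bound on the cone number, and that reduces to the one-line observation above about positive spanning. Accordingly, the main obstacle I anticipate is purely a matter of precise setup rather than depth: I must make sure the phrase ``bounding an $n$-simplex by an extra facet'' is read so that all $n+1$ original facets survive the cut, since this is exactly what guarantees both $|H|=n+2$ and the continued availability of the simplex normals that force the Helly number up to $n+1$.
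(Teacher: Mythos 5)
Your proposal is correct and follows essentially the same route as the paper: compute the Carath\'eodory number for $H$-convexity as $n+1$ via Theorem~\ref{thm:characterize} and then sandwich $k$ between Theorem~\ref{thm:lower-bound} and Theorem~\ref{thm:trivial-upper-bound} using $|H|-1=n+1$. The only difference is that you spell out the Helly-number and cone-number computations that the paper leaves implicit, and those details (the simplex normals forcing the Helly number to $n+1$, and positive spanning capping the cone number at $|H|-1$) are sound.
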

\begin{proof}
By Theorem \ref{thm:lower-bound}, the Carath\'eodory number is at least $n+1$, which is the Carath\'eodory number for $H$-convexity (by the characterization as in Theorem \ref{thm:characterize}). On the other hand, it is also at most $n+1$ as confirmed by Theorem \ref{thm:trivial-upper-bound}.
\end{proof}

We have given examples where the Carath\'edory number for strong convexity is the same or one less than the number of facets. In fact, the difference can be much larger for other cases. For example, in the case of the cube $K=[0,1]^n$, both the Carath\'eodory numbers are equal to $n$ and that is only half of the number of facets $2n$ (see \cite[Theorem $1.6$]{bui2021caratheodory}). If we fix the difference between the Carath\'eodory number for strong convexity and the number of facets, one may ask the following question.
\begin{problem}
\label{prob:characterize-for-t}
For $t\ge 0$, what are exclusively the polytopes $K$ that have the Carath\'eodory number for strong convexity exactly $t$ less than the number of facets.
\end{problem}

The case $t=0$ is treated in Corollary \ref{cor:simplices}. The case $t=1$ would be fully characterized if the following conjecture holds.
\begin{conjecture} \label{conj:|H|-1}
If the Carath\'eodory number for $K$-strong convexity is one less than the number of facets of a polytope $K$, then the normals of $K$ are the same as either a simplex bounded by an extra facet, or a pyramid that is not a simplex.
\end{conjecture}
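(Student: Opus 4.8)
The plan is to deduce the conjecture from the broader localization inequality announced in the abstract, namely that $k\le\max_{H'\subseteq H}h(H')$, where $k$ is the Carath\'eodory number for $K$-strong convexity and $h(H')$ is the Carath\'eodory number for $H'$-convexity. Granting this, I would argue as follows. Since a simplex has $k=|H|=n+1$, the hypothesis $k=|H|-1$ rules out $K$ being a simplex, so $K$ has at least $n+2$ facets, i.e.\ $|H|\ge n+2$. For every $H'\subseteq H$ the characterization of Theorem \ref{thm:characterize} gives $h(H')=\max\{\mathrm{Helly}(H'),\mathrm{cone}(H')\}$, and I would first record the uniform bound $h(H')\le|H|-1$: the Helly number never exceeds $n+1\le|H|-1$, while the cone number of any subset is at most $|H|-1$ (a proper subset has fewer than $|H|$ vectors, and for $H'=H$ a bounded polytope has positively spanning normals, so not all of $H$ can lie in an open halfspace missing the origin as conical position would require). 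The localization inequality then forces $\max_{H'\subseteq H}h(H')=|H|-1$, so some $H'$ attains either $\mathrm{Helly}(H')=|H|-1$ or $\mathrm{cone}(H')=|H|-1$, and the argument splits accordingly.

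In the Helly case, $\mathrm{Helly}(H')\ge|H|-1$ combined with $\mathrm{Helly}(H')\le n+1$ yields $|H|\le n+2$, hence $|H|=n+2$ and $\mathrm{Helly}(H')=n+1$. Thus $H'$ contains $n+1$ vectors that are the vertices of a simplex whose relative interior contains the origin. I would then observe that $n+1$ vectors in $\mathbb R^n$ with the origin in the relative interior of their convex hull positively span $\mathbb R^n$ in precisely the way that characterizes the normal fan of an $n$-simplex, so that they are exactly the normals of some simplex; the single remaining vector of $H$ is then an additional bounding normal. This matches the ``simplex bounded by an extra facet'' alternative and is consistent with Corollary \ref{cor:simplices-with-extra-bounding-facet}.

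In the cone case, $\mathrm{cone}(H')=|H|-1$ exhibits $|H|-1$ vectors $S\subseteq H$ in conical position whose positive hull avoids the vectors of $H'\setminus S$. Reading the cone number as a maximal vertex degree (the remark after its definition), I would interpret $S$ as the normals of $|H|-1$ facets meeting at a single apex, and the one vector $b\in H\setminus S$ as the normal of a base that caps them off, so that $H$ is the normal set of a pyramid, necessarily not a simplex since $|H|\ge n+2$. This matches the remaining alternative and is consistent with Corollary \ref{cor:pyramids}.

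The hard part will be the localization inequality itself, which is only conjectured; establishing it---or at least the instance needed here, that $k=|H|-1$ forces $\max_{H'\subseteq H}h(H')\ge|H|-1$---is the crux, and I do not expect it to follow from the bounds of Theorems \ref{thm:lower-bound} and \ref{thm:trivial-upper-bound} alone. A secondary obstacle is the geometric reconstruction of $K$ from an extremal subset. In the cone case especially, the freeness in the definition of $\mathrm{cone}(H')$ is only relative to $H'$, so one must still rule out the possibility that the leftover normal $b$ lies in the positive hull of $S$ (which would spoil the pyramid picture) and confirm that $S$ genuinely arises from facets through a common vertex of a polytope with normal set $H$, rather than of some auxiliary polyhedron with the same normals. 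Handling these degenerate configurations, and checking that the two cases are exhaustive and produce nothing outside the two named families, is where I expect the real work to lie.
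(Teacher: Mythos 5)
This statement is a \emph{conjecture} in the paper; the only argument the paper offers for it is the Observation that Conjecture \ref{conj:untrivial-upper-bound} implies it, and your proposal follows exactly that same conditional route --- use the localization inequality to force $\max_{H'\subseteq H} h(H')=|H|-1$, rule out the value $|H|$ via the simplex case, and then split into the Helly and cone alternatives via Theorem \ref{thm:characterize}. Your additional observations (that $|H|=n+2$ in the Helly case, and that the leftover normal cannot lie in $\pos S$ in the cone case since $H$ must positively span $\mathbb R^n$) fill in details the paper glosses over, and your flagging of the unproven localization inequality as the irreducible gap correctly reflects why the statement remains a conjecture.
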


The other direction is actually the content of Corollaries \ref{cor:pyramids} and \ref{cor:simplices-with-extra-bounding-facet}. Note that these classes of polytopes are not the same: If a simplex has the normals $v_0,v_1,v_2,v_3$ and we bound it by an extra facet of normal $-v_0$, then the result is not a pyramid.

In these corollaries, the Carath\'eodory number is completely decided by the set of normals. However, it seems to be not always the case.
For example, let $K$ be a pyramid in $\mathbb R^3$ with the base being a regular $m$-gon. 
Suppose we bound the pyramid by an extra bounding facet parallel to the base. 
If the new polytope $K'$ has almost the same height as the original pyramid (by letting the bounding facet so close to the apex), the Carath\'eodory number for strong convexity remains the same. Roughly speaking, $K'$ still looks like a pyarmid and one may follow the same kind of proof as in \cite[Section $5$]{bui2021caratheodory}. However, it seems that when we move the bounding facet downward to the base, the Carath\'eodory number would decrease at some point. The intuition is that when the height is so small (relatively to the base), the polytope looks like a $2$-dimensional $m$-gon and the Carath\'eodory number for strong convexity should not be large (regardless of $m$) and even bounded (note that the Carath\'edory number for a $2$-dimensional convex body is at most $3$). However, it seems to be hard to decide precisely the Carath\'eodory numbers for particular heights or decide at which height the number decreases. Such an argument should be very clumsy and we avoid that.
Note that we want the polygon to be regular to avoid some degenerate case, e.g., an $m$-gon for $m>4$ but still looking like a square.

The polytope $K'$ constructed above serves as an example where the Carath\'eodory number for strong convexity is $2$ less than the number of facets. It suggests that Problem \ref{prob:characterize-for-t} with $t=2$ is in general not easy to characterize, as the relative positions of facets matter. It also suggests the following conjecture.
\begin{conjecture} \label{conj:untrivial-upper-bound}
Let $h(H)$ denote the Carath\'eodory number for $H$-convexity. If the Carath\'eodory number for $K$-strong convexity is $k$ for a polytope $K$ with the set of normals $H$, we have
\[
    h(H)\le k\le \max_{H'\subseteq H} h(H').
\]
\end{conjecture}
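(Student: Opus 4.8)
The lower bound $h(H)\le k$ is exactly the content of Theorem \ref{thm:lower-bound}, so only the upper bound $k\le \max_{H'\subseteq H} h(H')$ requires work. The plan is to translate membership in a $K$-strongly convex hull into a family of conic (Farkas-type) certificates, to observe that each certificate involves only the normals that are \emph{active} at the point under consideration, and then to feed those active normals into the characterization of Theorem \ref{thm:characterize}.

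The first step is a duality description of $\conv_K$. Writing $K=\{x:\langle a,x\rangle\le b_a,\ a\in H\}$ and denoting by $s_a(X)=\sup_{x\in X}\langle a,x\rangle$ the support values of a compact set $X$, a translate $K+t$ contains $X$ exactly when $\langle a,t\rangle\ge s_a(X)-b_a$ for every $a\in H$, and it contains the target point $p$ exactly when $\langle a,t\rangle\ge\langle a,p\rangle-b_a$ for every $a$. Hence $p\in\conv_K X$ if and only if, for each $a_0\in H$, the inequality $\langle a_0,t\rangle\ge\langle a_0,p\rangle-b_{a_0}$ is a consequence of the feasible system $\{\langle a,t\rangle\ge s_a(X)-b_a\}$. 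Since $X$ fits inside a translate of $K$ the system is feasible, so by Farkas' lemma this is equivalent to the existence, for each $a_0$, of nonnegative multipliers $\lambda_a$ with $\sum_a\lambda_a a=a_0$ and $\sum_a\lambda_a(s_a(X)-b_a)\ge\langle a_0,p\rangle-b_{a_0}$.

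The second step is localization. Because $\conv_K X'\subseteq\conv_K X$ for $X'\subseteq X$ while the support values only decrease, the only way to lose $p$ when passing to a subset is to destroy one of the certificates above. For each $a_0$ whose certificate is binding at $p$ I would select a minimal certificate, supported on a small set $S(a_0)\subseteq H$ of normals, and retain in $X'$ one support point $x_a\in X$ attaining $s_a(X)$ for every $a\in S(a_0)$; keeping these points preserves the relevant support values and hence the certificate, so that $p\in\conv_K X'$. The active normals form the union $H'=\bigcup_{a_0}\bigl(S(a_0)\cup\{a_0\}\bigr)$, and the geometric meaning of a binding certificate is that its witnessing points all lie on facets of a single supporting translate of $K$ through $p$. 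Zooming in at $p$, this local configuration should become exactly an $H'$-convex hull situation, so the count of retained points ought to be governed by the Carath\'eodory number for $H'$-convexity, yielding $|X'|\le h(H')\le\max_{H''\subseteq H}h(H'')$ via Theorem \ref{thm:characterize}.

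The hard part, and the reason this is posed as a conjecture, is the passage from ``one support point per active normal'' to the sharp bound $h(H')$. A naive count produces about $|S(a_0)|$ points for each $a_0$, whose total over all $a_0\in H$ is far larger than $\max_{H'}h(H')$; the real difficulty is to organize the certificates so that the retained points are shared across the different $a_0$ and their number is controlled by the cone number and Helly number of the \emph{single} active set $H'$, precisely as in the $H$-convex case. Equivalently, one must rule out spurious supporting translates: after deleting points, the enlarged feasible region in $t$ might admit a translate $K+t$ containing $X'$ yet excluding $p$ through a normal that was inactive for $X$, and proving that $H'$ already blocks every such translate is the crux. I expect that making this localization faithful, so that the Carath\'eodory problem for $K$-strong convexity near a boundary point is genuinely the Carath\'eodory problem for $H'$-convexity with no loss in the count, is where the true obstruction lies, and it is plausibly where the flatness and finiteness of the facets of the polytope $K$ must enter in an essential way.
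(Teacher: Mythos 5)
The statement you are trying to prove is stated in the paper as Conjecture \ref{conj:untrivial-upper-bound}, and the paper offers no proof of it: only the lower bound $h(H)\le k$ is established there (as Theorem \ref{thm:lower-bound}), while the upper bound is explicitly left open. Your treatment of the lower bound is therefore correct and matches the paper exactly. For the upper bound, your Farkas-type reformulation of $p\in\conv_K X$ is sound: a translate $K+t$ contains $X$ iff $\langle a,t\rangle\ge s_a(X)-b_a$ for all $a\in H$, and deriving each inequality $\langle a_0,t\rangle\ge\langle a_0,p\rangle-b_{a_0}$ from this feasible system via nonnegative multipliers is a legitimate dual certificate; keeping one support point per normal in each certificate's support does preserve membership of $p$ in the smaller hull. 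But, as you yourself concede, this only bounds $|X'|$ by roughly $\sum_{a_0\in H}|S(a_0)|$, which is not even competitive with the trivial bound $|H|-1$ of Theorem \ref{thm:trivial-upper-bound}, let alone with $\max_{H'\subseteq H}h(H')$. The missing idea is precisely the mechanism that forces the certificates for different $a_0$ to share witnesses and that identifies the local picture at $p$ with a genuine $H'$-convexity problem to which Theorem \ref{thm:characterize} applies; your own worry about spurious supporting translates (a normal inactive for $X$ becoming the separator after deletion) is exactly the obstruction that no one has overcome. So this is a framework for attacking the conjecture, not a proof, and it should not be presented as resolving the statement.

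One smaller caution: even granting a faithful localization, the set $H'$ of active normals depends on $X$ and $p$, and nothing in your sketch prevents $H'$ from being all of $H$; the strength of the conjectured bound comes from taking the maximum of $h(H')$ over \emph{all} subsets, which by Theorem \ref{thm:characterize} equals the maximum of the Helly number of $H$ and the largest number of normals of $H$ in conical position (without the emptiness condition on the positive hull). Any successful argument will have to explain why the deletion process is governed by conical position alone rather than by the full combinatorics of the facet lattice of $K$, and that is where the paper's own discussion (the pyramid truncated near its base) suggests the geometry, not just the normal fan, may intervene.
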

The lower bound is the content of Theorem \ref{thm:lower-bound} while the upper bound is still in question. It can be seen as a better upper bound than the one in Theorem \ref{thm:trivial-upper-bound} as the number of facets is usually much larger. Stating differently, the upper bound is the maximum of two parameters: the maximal number of normals being the vertices of a simplex whose relative interior contains $0$, and the maximal number of normals being in conical position. The latter one is a relaxed version of the cone number. Stating the upper bound in this way, we can see that Conjecture \ref{conj:untrivial-upper-bound} implies Conjecture \ref{conj:|H|-1}.
\begin{observation}
    Conjecture \ref{conj:untrivial-upper-bound} implies Conjecture \ref{conj:|H|-1}.
\end{observation}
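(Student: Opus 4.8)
The plan is to assume Conjecture~\ref{conj:untrivial-upper-bound} and to read off the geometric conclusion of Conjecture~\ref{conj:|H|-1} directly from the reformulated upper bound. Write $m=|H|$ for the number of facets of $K$ and suppose the Carath\'eodory number is $k=m-1$. First I would record two reductions. Since a full-dimensional polytope in $\mathbb R^n$ with exactly $n+1$ facets is a simplex, and a simplex has $k=n+1$, the hypothesis $k=m-1$ already forces $m\ge n+2$. Next, expanding $h(H')=\max\{\text{Helly}(H'),\text{cone}(H')\}$ via Theorem~\ref{thm:characterize} for every $H'\subseteq H$, I would rewrite the conjectured upper bound as
\[
    \max_{H'\subseteq H} h(H') = \max\{a,b\},
\]
where $a$ is the maximal number of normals of $K$ that are the vertices of a simplex whose relative interior contains $0$ (the Helly number of $H$, attained at $H'=H$) and $b$ is the maximal number of normals of $K$ in conical position (attained by taking $H'$ to be such a conical-position set, whence the ``free of remaining vectors'' clause becomes vacuous). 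Two a priori bounds are then immediate: $a\le n+1$, since the Helly number never exceeds the ordinary one, and $b\le m-1$, since all $m$ normals cannot lie in one open halfspace without $K$ being unbounded.

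By Conjecture~\ref{conj:untrivial-upper-bound} we have $m-1=k\le\max\{a,b\}$, so at least one of $a\ge m-1$ and $b\ge m-1$ holds, and I would treat these as the two cases. In the first case $n+1\ge a\ge m-1\ge n+1$ forces $m=n+2$ and $a=n+1$; thus $H$ contains $n+1$ normals forming a positive basis, these bound a simplex, and the single remaining normal is that of an extra bounding facet. Hence $H$ coincides with the normal set of a simplex bounded by an extra facet, which is the first alternative of Conjecture~\ref{conj:|H|-1}.

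In the second case $b\ge m-1$ forces $b=m-1$, so some $m-1$ normals $w_1,\dots,w_{m-1}$ lie in conical position while $w_m$ is the remaining one. The crux, and the step I expect to be the main obstacle, is to show that this \emph{directional} condition makes $H$ the normal set of a pyramid, even though $K$ itself need not be one (only the directions of the normals, not the offsets of the facets, are constrained). I would argue constructively. Conical position says each $w_i$ is an extreme ray of the cone $C=\pos\{w_1,\dots,w_{m-1}\}$, which is pointed (as the $w_i$ lie in an open halfspace) and full-dimensional (otherwise $K$ would be unbounded); consequently the polar cone $C^{\circ}=\{x:\langle w_i,x\rangle\le 0,\ i<m\}$ has its apex at the origin and exactly $m-1$ facets, with normals $w_1,\dots,w_{m-1}$, all passing through that apex. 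Boundedness of $K$ means the normals positively span $\mathbb R^n$, which gives $-w_m\in\inte C$; therefore intersecting $C^{\circ}$ with a halfspace $\{\langle w_m,x\rangle\le c_m\}$ has recession cone $C^{\circ}\cap\{\langle w_m,\cdot\rangle\le 0\}=\{0\}$ and truncates $C^{\circ}$ into a bounded pyramid with apex $0$, base facet of normal $w_m$, and all $m-1$ lateral facets intact. Its normal set is exactly $H$, and since $m-1>n$ its base is an $(n-1)$-polytope with more than $n$ facets, hence not a simplex; so $H$ is the normal set of a non-simplex pyramid, the second alternative of Conjecture~\ref{conj:|H|-1}.

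The delicate points, all concentrated in this last case, are verifying that the truncation discards none of the lateral facets and yields a bounded polytope; both rely on the extreme-ray property and on $-w_m\in\inte C$, which is precisely where conical position and the boundedness of $K$ enter. Once the two cases are in place they are exhaustive, so every polytope with $k=|H|-1$ falls into one of the two prescribed families, which is the content of the observation.
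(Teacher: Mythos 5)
Your proof is correct and follows essentially the same route as the paper: apply the conjectured upper bound to force $\max_{H'\subseteq H}h(H')=|H|-1$, split via Theorem \ref{thm:characterize} into the Helly case and the conical-position case, and identify the remaining normal as an extra bounding facet of a simplex or the base of a pyramid. You supply more detail than the paper does on the final step (the explicit polar-cone construction showing that $|H|-1$ normals in conical position plus one more are the normal set of a non-simplex pyramid), which the paper asserts without proof; that added verification is sound.
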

\begin{proof}
    Assume Conjecture \ref{conj:untrivial-upper-bound} holds. Consider a polytope $K$ so that the Carath\'eodory number for $K$-strong convexity is $|H|-1$.
    Due to Theorem \ref{thm:characterize}, if we have $h(H')=|H|$ for some $H'\subseteq H$, then $K$ is a simplex, which is obviously not the case. Therefore, $\max\limits_{H'\subseteq H} h(H') = |H|-1$ by the upper bound in Conjecture \ref{conj:untrivial-upper-bound}. By Theorem \ref{thm:characterize}, we either have $|H|-1$ vectors in $H$ being the vertices of a simplex whose relative interior contains $0$, or we have $|H|-1$ vectors in $H$ in conical position. The only remaining vector in $H$ is the normal of either an extra facet to a simplex or the base of a pyramid.
\end{proof}

Conjecture \ref{conj:untrivial-upper-bound} also implies a known result that the Carath\'eodory number for $K$-strong convexity is at most $n+1$ for $n$-dimensional polytopes $K$ with the generating property. The generating property is defined as follows.
\begin{definition}
A convex body $K$ is said to be generating if for every set of translate vectors $T$, either the intersection of the translates of $K$ by $T$ is empty or we can find a convex body $K'$ such that
\[
K' + \bigcap_{t\in T} (K-t) = K.
\]
\end{definition}

In fact, it was proved in \cite{karasev2001characterization} (in Russian, see \cite{holmsen2017colorful} for a version in English) that the Carath\'eodory number for $K$-strong convexity is at most $n+1$ for any $n$-dimensional convex body $K$ with the generating property, using topological arguments. In \cite{bui2025every}, the generating property for polytopes was shown to be identical to the notion of strong monotypy for polytopes, which in turn can be used to characterize the set of normals as follows.
\begin{theorem*}[Characterization of polytopes with the generating property \cite{bui2025every}]
    The following two conditions are equivalent for an $n$-dimensional polytope $P$:
    \begin{itemize}
        \item $P$ has the generating property
        \item Every $n+1$ normals of $P$ are not in conical position.
    \end{itemize}
\end{theorem*}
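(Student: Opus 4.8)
The plan is to recast the generating property in the language of Minkowski summands and then read off both implications from the local geometry of the normal fan. Write $P=\{x:\langle a_i,x\rangle\le b_i,\ i\in I\}$, so that $H=\{a_i\}$ is the set of facet normals. The first step is to note that for any convex body $Q$ the intersection $\bigcap_{q\in Q}(P-q)$ is exactly the erosion $P\ominus Q=\{x:\langle a_i,x\rangle\le b_i-h_Q(a_i)\}$, where $h_Q(a)=\max_{q\in Q}\langle a,q\rangle$ is the support function. Thus $P$ is generating precisely when, for every $Q$ with $P\ominus Q\neq\emptyset$, the eroded polytope $P\ominus Q$ is a Minkowski summand of $P$. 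The second step is the standard criterion: $M$ is a summand of $P$ iff $h_P-h_M$ is again a support function, i.e.\ iff $h_P-h_M$ is convex (it is automatically positively homogeneous of degree $1$). Since $h_P$ and $h_M$ are piecewise linear with respect to the common refinement of the two normal fans, convexity can be tested wall by wall, and it fails exactly when $M$ carries a crease (an edge whose normal cone is an $(n-1)$-dimensional wall of the fan of $M$) on which $h_P$ is either flat or creases by a strictly smaller amount; call this an \emph{excess crease}. The whole theorem then reduces to the statement that erosion of $P$ can produce an excess crease iff $H$ contains $n+1$ normals in conical position.

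For the forward implication I argue the contrapositive. Given $n+1$ normals $a_0,\dots,a_n\in H$ in conical position, their positive hull $C=\pos\{a_0,\dots,a_n\}$ is a pointed, non-simplicial cone: pointedness is strict separation from $0$, and non-simpliciality comes from the positive independence of $n+1$ vectors. I would choose $Q$ so that $P\ominus Q$ acquires a non-simple face whose normal cone is exactly $C$; concretely, fix a target point $v'$ and take $Q$ to be a small polytope, contained in the dual cone of the remaining normals, with $h_Q(a_i)=b_i-\langle a_i,v'\rangle$ for $i\le n$ and small support elsewhere, so that $F_0,\dots,F_n$ are the only facets pushed inward to meet at $v'$ while $P\ominus Q$ stays nonempty. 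Pushing these facets deeply makes the edges of $P\ominus Q$ at $v'$ long; since $C$ is non-simplicial, at least one of its walls carries such an edge while the corresponding crease of $h_P$ is of bounded, $P$-fixed size, producing an excess crease. Hence $P\ominus Q$ is not a summand and $P$ is not generating.

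For the reverse implication, suppose $H$ has no $n+1$ normals in conical position and, for contradiction, that some nonempty erosion $M=P\ominus Q$ has an excess crease at a wall $W=N_M(e')$, with $e'$ an edge of $M$ between vertices $v_1',v_2'$. Each endpoint is a vertex of $M$, hence simple (a vertex of degree $\ge n+1$ would exhibit $n+1$ facet normals in conical position, matching the vertex-degree reading of the cone number in Theorem \ref{thm:characterize}), with facet normals $a_{i_1},\dots,a_{i_{n-1}},a_p$ at $v_1'$ and $a_{i_1},\dots,a_{i_{n-1}},a_q$ at $v_2'$. I would then show that $a_{i_1},\dots,a_{i_{n-1}},a_p,a_q$ are in conical position: strict separation from $0$ holds because $N_M(v_1')\cup N_M(v_2')$ subtends a dihedral angle strictly less than $\pi$ about $W$ (otherwise $M$ would have no genuine edge there), and positive independence is meant to follow from $a_p,a_q$ lying on opposite sides of $\operatorname{span}W$ together with the simplicial structure at the two vertices. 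This contradicts strong monotypy, so no excess crease can occur, $h_P-h_M$ is convex, and every nonempty erosion is a summand; i.e.\ $P$ is generating.

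The main obstacle is precisely this last extraction step — showing that an excess crease always yields $n+1$ \emph{genuinely} conical normals. Strict separation is essentially automatic, but the positive independence of $a_{i_1},\dots,a_{i_{n-1}},a_p,a_q$ is not a formal consequence of the two adjacent simplicial cones alone, and ruling out the degenerate configurations seems to require the extra rigidity that $M$ is an honest erosion $P\ominus Q$ (so the facet offsets are the support values $h_Q(a_i)$ of a single body, not arbitrary facet translations). Pinning down exactly how this erosion constraint forces positive independence — and, symmetrically, arranging the forward construction so that the engineered excess crease is realizable by a true $Q$ with $P\ominus Q$ nonempty — is where the real work lies; the cleanest way to organize both is probably by induction on $n$, passing to the two-dimensional cross-section transverse to $W$.
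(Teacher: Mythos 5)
The paper you were given does not actually prove this statement: it is imported verbatim from \cite{bui2025every}, so there is no internal proof to compare against. Judged on its own terms, your plan — rewrite the generating property as ``every nonempty erosion $P\ominus Q=\bigcap_{q\in Q}(P-q)$ is a Minkowski summand of $P$,'' test summands via convexity of $h_P-h_{P\ominus Q}$, and localize the failure of convexity to walls of the normal fan — is a sensible and standard framework. But both implications are left with their essential steps unproved, as you yourself concede, and at least one of those steps is not a routine verification but the actual content of the theorem.

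In the reverse direction, the extraction of $n+1$ conically positioned normals from an ``excess crease'' does not follow from the local picture you describe. Take $M$ the unit square in $\mathbb{R}^2$ and $e'$ its bottom edge: the facet normals at its two endpoints are $(0,-1),(-1,0)$ and $(0,-1),(1,0)$, and the resulting triple contains the antipodal pair $(\pm 1,0)$, so it is not strictly separated from $0$ and not in conical position — the dihedral angle you invoke is exactly $\pi$, not strictly less. Hence neither strict separation nor positive independence is a consequence of two adjacent simple vertices; both must be forced by the hypothesis that the crease is in \emph{excess}, and you have not shown how. You also assume all vertices of $M=P\ominus Q$ are simple because a non-simple vertex would exhibit $n+1$ normals in conical position; that inference is valid only for full-dimensional $M$, whereas $P\ominus Q$ can degenerate to a lower-dimensional polytope, where vertex normal cones are not pointed and the fan analysis must be redone. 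In the forward direction, prescribing $h_Q(a_i)=b_i-\langle a_i,v'\rangle$ on $a_0,\dots,a_n$ while keeping $h_Q$ ``small elsewhere'' is obstructed by sublinearity whenever some other normal $a\in H$ lies in $\pos\{a_0,\dots,a_n\}$ (nothing in the hypothesis excludes this): then $h_Q(a)$ is forced to be large and the facet with normal $a$ may cut off the intended vertex $v'$. Finally, the ``long edges of $P\ominus Q$ versus bounded creases of $h_P$'' comparison is not quantitative — since $P\ominus Q\subseteq P$, its edges cannot be made long in any uniform sense relative to those of $P$. Each of these points needs a genuine argument before the proposal becomes a proof.
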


\begin{corollary}
    If Conjecture \ref{conj:untrivial-upper-bound} holds, then we can directly deduce that the Carath\'eodory number for $K$-strong convexity is at most $n+1$ for an $n$-dimensional polytope $K$ with the generating property.
\end{corollary}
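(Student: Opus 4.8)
The plan is to combine the hypothesised Conjecture~\ref{conj:untrivial-upper-bound} with the characterization theorem for generating polytopes, reducing everything to a uniform bound on the Carath\'eodory number for $H'$-convexity over all subsets $H'\subseteq H$. Let $K$ be an $n$-dimensional polytope with the generating property and let $H$ be its set of normals. By Conjecture~\ref{conj:untrivial-upper-bound}, the Carath\'eodory number $k$ for $K$-strong convexity satisfies $k\le \max_{H'\subseteq H} h(H')$, so it suffices to show that $h(H')\le n+1$ for every subset $H'\subseteq H$.

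First I would fix an arbitrary $H'\subseteq H$ and apply Theorem~\ref{thm:characterize}, which expresses $h(H')$ as the maximum of the Helly number for $H'$-convexity and the cone number of $H'$. The Helly number is bounded by $n+1$ for free: it is the maximal number of vectors of $H'$ that are the vertices of a simplex whose relative interior contains $0$, and any simplex in $\mathbb R^n$ has at most $n+1$ vertices. Thus the entire burden falls on bounding the cone number of $H'$ by $n$.

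The key step is to observe that being in conical position is a \emph{hereditary} property: if a finite family of vectors is in conical position, then so is any subfamily. Indeed, a linear functional that is strictly positive on every vector of the family is strictly positive on any subfamily, giving strict separability from $0$; and if no vector lies in the positive hull of the remaining vectors of the family, the same holds after passing to a subfamily, since deleting vectors only shrinks the positive hull. Now the characterization of polytopes with the generating property tells us that no $n+1$ normals of $K$ are in conical position. Were some $m>n$ vectors of $H'$ in conical position, heredity would produce $n+1$ vectors of $H\supseteq H'$ in conical position, a contradiction. Hence at most $n$ vectors of $H'$ can ever be in conical position, so in particular the cone number of $H'$ is at most $n$.

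Combining the two bounds gives $h(H')=\max\{\text{Helly},\text{cone}\}\le \max\{n+1,n\}=n+1$ for every $H'\subseteq H$, whence $\max_{H'\subseteq H} h(H')\le n+1$ and the desired bound $k\le n+1$ follows from Conjecture~\ref{conj:untrivial-upper-bound}. The only point requiring genuine care is the heredity of conical position used to pass from ``no $n+1$ in conical position'' to ``no more than $n$ in conical position at all,'' which is precisely what lets the generating hypothesis control the cone numbers of \emph{every} subsystem $H'$ simultaneously rather than of $H$ alone.
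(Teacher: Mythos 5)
Your proposal is correct and follows essentially the same route as the paper: reduce via Conjecture~\ref{conj:untrivial-upper-bound} to bounding $h(H')$ for all $H'\subseteq H$, apply Theorem~\ref{thm:characterize}, bound the Helly number by $n+1$, and bound the cone number by $n$ using the characterization of generating polytopes. The only difference is that you spell out the heredity of conical position, a detail the paper leaves implicit when asserting that the cone number of every subset $H'$ is at most $n$.
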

\begin{proof}
    Let $H$ be the set of normals of $K$. By the characterization of the polytopes with the generating property, the cone number of any subset $H'\subseteq H$ is at most $n$. Meanwhile, the Helly number for any subset $H'$ is at most $n+1$. Therefore, if Conjecture \ref{conj:untrivial-upper-bound} holds, then the Carath\'eodory number for $K$-strong convexity is at most $\max\{n,n+1\}=n+1$.
\end{proof}

It seems that one can conclude something stronger since the above proof points out that the cone number is only at most $n$ (instead of $n+1$). We relate it to the result \cite[Theorem $1.5$]{bui2021caratheodory} as follows.
\begin{theorem*}[\cite{bui2021caratheodory}]
    Given a generating set $K$ in $\mathbb R^n$. If a point $p$ is in the $K$-strongly convex hull of a set $X$ but not in the ordinarily convex hull of $X$, then $p$ is in the $K$-strongly convex hull of a subset of $X$ with at most $n$ points.
\end{theorem*}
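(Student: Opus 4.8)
The plan is to pass to the ``dual'' description in terms of admissible translation vectors. Writing $M:=\bigcap_{x\in X}(K-x)$ for the set of vectors $t$ with $X\subseteq K-t$, one has the equivalences
$p\in\conv_K X\iff M\subseteq K-p$ and, for $X'\subseteq X$, $p\in\conv_K X'\iff M':=\bigcap_{x\in X'}(K-x)\subseteq K-p$. Since $X'\subseteq X$ gives $M\subseteq M'$, passing to a subset only enlarges the intersection, so the task becomes: select $X'\subseteq X$ with $|X'|\le n$ such that the larger body $M'$ still fits inside the single translate $K-p$. As $X$ lies in a translate of $K$ we have $M\ne\emptyset$, and the generating hypothesis lets us write $K=K'+M$ with $K'=K\ominus M$, so the assumption $p\in\conv_K X$ reads $p\in K'$.

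First I would analyse how $M$ meets the boundary of $K-p$. At a contact point $q\in M\cap\partial(K-p)$ the common outer normal $u$ supports both bodies; because $M=\bigcap_x(K-x)$, the normal cone of $M$ at $q$ is $\pos\{\nu_x: q+x\in\partial K\}$, where $\nu_x$ is the outer normal of $K$ at $q+x$, so $u$ is a nonnegative combination of active normals $\nu_x$. A one-line computation shows every contact normal satisfies $\langle x,u\rangle\le\langle p,u\rangle$ for all $x\in X$: from $q\in K-x$ we get $\langle q+x,u\rangle\le h_K(u)=\langle q+p,u\rangle$. The hypothesis $p\notin\conv X$ now rules out the ``Helly regime'': if the contact normals positively spanned $\mathbb R^n$, then writing $0=\sum_i\lambda_i u_i$ with $\lambda_i>0$ and using $\langle x,u_i\rangle\le\langle p,u_i\rangle$ forces $\langle x,u_i\rangle=\langle p,u_i\rangle$ for every $i$ and every $x$, whence $x=p$ for all $x\in X$ and $p\in\conv X$, a contradiction. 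Hence the relevant normals are strictly separated from $0$, that is, they sit in \emph{conical position} once we discard any normal lying in the positive hull of the others (such a normal corresponds to a constraint that is redundant near the contact).

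With the normals confined to conical position, the generating hypothesis supplies the count: no $n+1$ normals of $K$ lie in conical position (for polytopes this is exactly the characterization of \cite{bui2025every}, and it is the structural content the generating property provides in general), so the pruned family of active normals has at most $n$ members, equivalently at most the cone number of $H$. Charging each retained $\nu_x$ to its point $x\in X$ yields $X'$ with $|X'|\le n$. The step I expect to be the main obstacle is verifying that this selection certifies $p\in\conv_K X'$, i.e. $M'\subseteq K-p$: enlarging $M$ to $M'$ could in principle let $M'$ protrude through $\partial(K-p)$ along a direction not witnessed by any contact of $M$, so one must argue that the facial structure of $M$ along $\partial(K-p)$ is already pinned down by the at most $n$ retained constraints and therefore survives deletion of the remaining points. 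The generating decomposition $K=K'+M$, together with the fact that every $n+1$ normals fail to be conically positioned, is what I would use to make this certification local rather than global. A secondary technicality is to secure the strict (open, not merely closed) separation needed for genuine conical position and to reduce an interior $p$ to a boundary point of $\conv_K X$; both are handled by a limiting argument based on the strict separation $\langle p,w\rangle>h_X(w)$ guaranteed by $p\notin\conv X$.
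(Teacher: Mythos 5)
First, note that the paper does not prove this statement at all: it is quoted as a known theorem from \cite{bui2021caratheodory}, so there is no in-paper proof to compare against and your proposal must stand on its own. It does not. The dual reformulation ($p\in\conv_K X\iff M\subseteq K-p$ with $M=\bigcap_{x\in X}(K-x)$), the contact-normal inequality $\langle x,u\rangle\le\langle p,u\rangle$, and the observation that $p\notin\conv X$ prevents the contact normals from positively spanning $\mathbb R^n$ are all correct, but they are the routine part. The decisive step --- that the at most $n$ retained constraints already force $M'=\bigcap_{x\in X'}(K-x)\subseteq K-p$ --- is exactly the assertion of the theorem, and you explicitly leave it as an ``obstacle'' with no mechanism offered. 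Deleting points of $X$ enlarges $M$ globally, not only near the contact points you analysed, so $M'$ can a priori protrude through a portion of $\partial(K-p)$ where $M$ had positive clearance; the decomposition $K=K'+M$ as you invoke it does nothing to exclude this. A sketch that stops at the statement to be proved is not a proof.

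The counting step is also unsound for the generality claimed. The equivalence between the generating property and the absence of $n+1$ normals in conical position is a theorem about \emph{polytopes}; for a general generating body such as the Euclidean ball the set of normals is all of $\mathbb S^{n-1}$, which contains $n+1$ (indeed arbitrarily many) vectors in conical position, so ``the structural content the generating property provides in general'' is not what you assert. What is small for such bodies is the \emph{cone number}, which additionally requires the positive hull to be free of the remaining normals of $K$; you would need to show that the active normals you retain satisfy this extra condition, and that across \emph{all} contact points one can charge at most $n$ points of $X$ in total --- neither is addressed. Finally, ``discard any normal lying in the positive hull of the others'' does not obviously discard a redundant constraint, since $K-x$ is a translate of $K$ and not a halfspace, so redundancy is not determined by normal directions alone. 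Each of these gaps is substantive; together they leave the argument as a plausible opening move rather than a proof.
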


Following this direction, and together with the lower bound from Theorem \ref{thm:lower-bound}, one may start thinking of characterizing which polytopes with the generating property (i.e., strongly monotypic polytopes) have the Carath\'eodory number $n$ for strong convexity and which ones have the number $n+1$. We leave this as an open problem. It may be related to a geometric characterization of the strongly monotypic polytopes, which has been done in $\mathbb R^3$ only \cite{borowska2008strongly}.

As we may observe, the Carath\'eodory number is at least the Helly number for ordinary convexity as well as for $H$-convexity and $K$-strong convexity. However, this is known to be not true for the following general notion of convexity (see \cite{hammer1960kuratowski} for an example).

\begin{definition}
A family of subsets of a ground set $E$ is said to be a convex-structure $\mathcal C$ if $\emptyset, E$ are in the family and the family is closed under intersection.\footnote{Note that the usual definition for $K$-strong convex hull of a set $X$ is undefined when $X$ cannot be contained in a translate of $K$. However, we can set the hull to be $E=\mathbb R^n$ in this case to make strong convexity to be a convex-structure. This modification does not affect the Carath\'eodory number since the Carath\'eodory number of points in $\conv_K X$ for $X$ not contained in any translate of $K$ is actually the Helly number (as noted in \cite[Remark 1.10]{bui2021caratheodory}). See also Theorem \ref{thm:cara>helly}.} Each set in the family is called a $\mathcal C$-convex set. The Helly number and the Carath\'eodory number for $\mathcal C$-convexity are defined accordingly.
\end{definition}

Nonetheless, if every set in $\mathcal C$ is (ordinarily) convex in a Euclidean space, then the relation holds, as in the following theorem.
\begin{theorem}
\label{thm:cara>helly}
If every set in a convex-structure is (ordinarily) convex in a Euclidean space, then the Carath\'eodory number is at least the Helly number for the convexity of this structure.
\end{theorem}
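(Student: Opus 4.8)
The plan is to show that any value $h$ of the Helly number is matched by a finite point configuration forcing the Carath\'eodory number to be at least $h$. First I would recall the standard reformulation of the Helly number: it equals the largest size of an inclusion-minimal family of $\mathcal C$-convex sets with empty intersection. Thus there exist $\mathcal C$-convex sets $C_1,\dots,C_h$ with $\bigcap_{i=1}^h C_i=\emptyset$ while $\bigcap_{i\ne j}C_i\ne\emptyset$ for every $j$. (If the Helly number is infinite, one takes such minimal families of unbounded size and runs the same construction, concluding that the Carath\'eodory number is $\infty$.)

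Next I would extract a witness point set. For each $j$ choose $p_j\in\bigcap_{i\ne j}C_i$. Then $p_j\notin C_j$ (otherwise $p_j\in\bigcap_i C_i=\emptyset$), and the $p_j$ are pairwise distinct (a coincidence $p_j=p_{j'}$ would again lie in the total intersection). Put $X=\{p_1,\dots,p_h\}$. The key elementary observation is that for each $j$ every point of $X\setminus\{p_j\}$ lies in $C_j$, so, since $C_j$ is $\mathcal C$-convex, $\conv_{\mathcal C}(X\setminus\{p_j\})\subseteq C_j$. Consequently it suffices to produce a single point $p\in\conv_{\mathcal C}(X)$ with $p\notin C_j$ for all $j$: such a $p$ lies in $\conv_{\mathcal C}(X)$ but in none of the hulls $\conv_{\mathcal C}(X')$ with $|X'|\le h-1$, because any such $X'$ is contained in some $X\setminus\{p_j\}$ and hence $\conv_{\mathcal C}(X')\subseteq C_j\not\ni p$. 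This is exactly the statement that the Carath\'eodory number exceeds $h-1$.

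The heart of the argument---and the only place where ordinary convexity enters---is showing that $\conv_{\mathcal C}(X)$ is \emph{not} covered by $C_1\cup\dots\cup C_h$; this is precisely the step that fails for abstract convex structures, as Hammer's example shows. Here I would use that $W:=\conv_{\mathcal C}(X)$ is, by hypothesis, an ordinarily convex (hence contractible) subset of Euclidean space, and that the traces $C_j\cap W$ are convex. By minimality of the family, every proper subcollection has a common point inside $W$: for $S\subsetneq\{1,\dots,h\}$, choosing $j\notin S$ gives $p_j\in W\cap\bigcap_{i\in S}C_i$. The full intersection, however, is empty. If the $C_j$ covered $W$, the nerve of this convex closed cover would be the full boundary of the $(h-1)$-simplex, the sphere $S^{h-2}$, whereas the nerve lemma forces the nerve to be homotopy equivalent to the contractible set $W$; this contradiction shows the cover is incomplete, and any uncovered point is the desired $p$.

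I expect the main obstacle to be exactly this covering step, and two points there need care. First, the topological argument wants the $\mathcal C$-convex sets to be closed, so that the convex cover is a good cover and the nerve lemma (or an equivalent closed-cover KKM argument) applies; this is automatic in all settings of interest in the paper, where halfspaces, translates of a body, and their intersections are closed, but handling possibly non-closed $\mathcal C$-convex sets requires an additional approximation, which I regard as the genuine technical difficulty. Second, one must ensure that the uncovered point $p$ truly lies in $\conv_{\mathcal C}(X)$ rather than merely in the ordinary hull $\conv(X)$; this is automatic, since $\conv(X)\subseteq\conv_{\mathcal C}(X)=W$ and $p$ is located inside $W$ itself. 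Everything else---the Helly reformulation, the selection of the $p_j$, and the reduction to non-covering---is routine.
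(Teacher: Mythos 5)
Your reduction is exactly the paper's: take an inclusion-minimal non-intersecting family $C_1,\dots,C_h$, pick witnesses $p_j\in\bigcap_{i\ne j}C_i$, observe that $\conv_{\mathcal C}(X\setminus\{p_j\})\subseteq C_j$ by $\mathcal C$-convexity of $C_j$, and conclude once you exhibit a point of the hull lying in no $C_j$. Where you diverge is the one nontrivial step, producing that uncovered point. You invoke the nerve lemma: if the $C_j$ covered the contractible set $W$, the nerve of this convex cover would be $\partial\Delta^{h-1}\simeq S^{h-2}$, contradicting contractibility. The paper instead gives an elementary metric argument on the compact set $\mathcal S=\conv\{p_1,\dots,p_h\}$: take $p\in\mathcal S$ minimizing $\max_i d(p,C_i)$ and note that if $p$ lay in some $C_i$, a small perturbation of $p$ toward $p_i$ would strictly decrease the maximum (distances to the convex sets $C_j$, $j\ne i$, do not increase since $p_i\in C_j$, while $d(\cdot,C_i)$ grows only by $O(\epsilon)$), a contradiction. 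Your route is correct but buys the conclusion with heavier machinery: you need the nerve theorem for finite closed convex covers, and it would be cleaner to run it on the compact set $\conv(X)$ and its convex traces $C_j\cap\conv(X)$ rather than on the possibly unbounded $W$. The closedness caveat you flag is real but not specific to your approach --- the paper's assertion that the minimal value of $\max_i d(p,C_i)$ is positive likewise uses that $\bigcap_i\cl C_i\cap\mathcal S=\emptyset$, so both arguments tacitly assume closed $\mathcal C$-convex sets. The paper's perturbation argument is the more elementary and self-contained of the two; your topological version generalizes more readily to statements about covers where no linear structure is available.
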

We prove the theorems in the following sections.

\section{Proof of the main result}
\label{sec:characterize}
In this section, we prove Theorem \ref{thm:characterize}, the main result of the article.

Let $X$ be a finite set such that $0\in\conv_H X$ but $0\notin\conv_H X'$ for any proper subset $X'\subset X$.
For convenience, we name the elements of $X$ by $x_i$ and the elements of $H$ by $a_i$, where the index $i$ can be replaced by other variables.
  The two properties can be understood as:\footnote{In case the readers may find the writing of $\exists f(j)$ in Property $2$ not very conventional, one can rewrite it as $\forall j\;\exists i=f(j),\; \left(\langle a_{i}, x_j\rangle\ge 0;\quad \forall j'\ne j,\;\langle a_{i}, x_{j'}\rangle < 0\right)$.}
  \begin{itemize}
    \item $0\in\conv_H X$ means
    \begin{equation} \label{covering}
        \forall i\;\exists j,\; \langle a_i, x_j\rangle \ge 0.
    \end{equation}
    \item
    $\forall X'\subset X,\; 0\notin\conv_H X'$ means 
    \begin{equation} \label{excluding}
        \forall j\;\exists f(j),\; \left(\langle a_{f(j)}, x_j\rangle\ge 0\quad\text{and}\quad \forall j'\ne j,\;\langle a_{f(j)}, x_{j'}\rangle < 0\right).
    \end{equation}
  \end{itemize}
  Property \eqref{covering} states that the set $H$ is covered by the family of halfspaces $\{a: \langle a, x_j\rangle\ge 0\}$ for every $x_j\in X$, while Property \eqref{excluding} states that each halfspace of the family \emph{exclusively} contains a point of $H$ (i.e., each halfspace contains a point of $H$ that no other halfspace contains).
  
  We consider the following two cases. We show in the first case that the cardinality $|X|$ is at most the Helly number for $H$-convexity and in the second case that $|X|$ is at most the cone number of $H$. These two facts confirm that the maximum of the Helly number and the cone number is an upper bound of the Carath\'eodory number. We will later show that it is also a lower bound to finish the proof. The two cases are:

  \begin{itemize}[leftmargin=*]
    \item
   $0\in\conv\{a_{f(j)}\}$.

      We show that $\{a_{f(j)}\}$ must be the vertices of a simplex with $0$ in its relative interior, which means that $|X|$ in this case is at most the Helly number for $H$-convexity.
      Indeed, suppose $0$ is in the convex hull of a proper subset, say $\sum\limits_{j=1}^{\ell} \lambda_j\, a_{f(j)}=0$ where $1<\ell<|X|$, all $\lambda_j$ are positive. Multiplying both sides by $x_k$ for $k>\ell$, we have $\sum\limits_{j=1}^{\ell} \lambda_j\, \langle a_{f(j)}, x_k\rangle=0$. The left hand side is negative due to $\langle a_{f(j)}, x_k\rangle < 0$ for every $j<k$. The contradiction means that $0$ is not in the convex hull of any proper subset. 
      It follows that $\{a_{f(j)}\}$ are the vertices of a simplex whose relative interior contains $0$.

  \item
  $\{a_{f(j)}\}$ is separable from $0$ by a halfspace $\{a: \langle \vec{n}, a\rangle > 0\}$.

    We first manipulate\footnote{This way of manipulation is suggested by Roman Karasev in a slightly different formulation (private communication).} the halfspaces to achieve a stronger form for Property \eqref{excluding} while keeping Property \eqref{covering}. This is a key step to obtain a new subset $\{a'_{f(j)}\}$ of $H$ whose cardinality is at most the cone number.

    For a given $j$, we rotate the halfspace $\{a: \langle a, x_j\rangle\ge 0\}$ by decreasing $x_j$ by $\alpha\, \vec{n}$ ($\alpha \ge 0$) as much as possible so that the two properties remain valid. Since there are points of $H$ that are exclusively contained in this halfspace, e.g. $a_{f(j)}$ by Property \eqref{excluding}, we cannot decrease after a certain $\alpha$ (as otherwise we will break Property \eqref{covering} by not covering all the points). 
    Indeed, let the set of the points exclusively contained in the halfspace be $A$. 
    For every $a_i\in A$, we need
    \[
        \langle a_i,x_j - \alpha\, \vec{n}\rangle \ge 0 \iff \langle a_i, x_j\rangle\ge \alpha\, \langle \vec{n}, a_i\rangle \iff \alpha\le \frac{\langle a_i, x_j\rangle}{\langle \vec{n}, a_i\rangle}.
    \]
    Setting $\alpha=\inf\limits_{a_i\in A} \frac{\langle a_i, x_j\rangle}{\langle \vec{n}, a_i\rangle}$, we obtain
    the new halfspace containing a point $\hat{a}\in A$ on the supporting hyperplane. Note that $\hat{a}$ is a point that attains the infimum, due to the closedness of $H$. 
    We replace $a_{f(j)}$ by $\hat{a}$ and replace $x_j$ by $x_j - \alpha\, \vec{n}$. This new setting for Property \eqref{excluding} has a nicer feature that $\langle a_{f(j)}, x_j\rangle = 0$ (rather than inequality in the original) but still keeps the old feature $\langle a_{f(j)}, x_{j'}\rangle < 0$ for $j'\ne j$.

  We modify the halfspaces and take the point on each hyperplane by the above method sequentially for $j=1, \ldots, |X|$, and eventually arrive at a new property in the place of Property \eqref{excluding}:
\[
  \forall j\;\exists f(j),\; \left(\langle a_{f(j)}, x_j\rangle = 0\quad\text{and}\quad \forall j'\ne j,\;\langle a_{f(j)}, x_{j'}\rangle < 0\right),
\]
  while still satisfying Property \eqref{covering}.

  These points $\{a_{f(j)}\}$ are in conical position, since every point $a_{f(j)}$ is separated\footnote{When we say $X$ is separated from $Y$ by a set $Z$ and $Z$ is not a hyperplane, we mean $Z$ contains $X$ and is disjoint from $Y$.} from the rest by $\{a: \langle a, x_j\rangle \ge 0\}$. Moreover, their positive hull contains no other point of $H$. Indeed, suppose there is a point $a'\in H$ other than $\{a_{f(j)}\}$ in the positive hull, we have $a'=\sum\limits_{j=1}^k \lambda_j\, a_{f(j)}$ where $\lambda_j$ are all nonnegative with at least two of them being positive. It means $a'$ is not in the halfspace of any $x_i$, since $\langle a', x_i\rangle =\sum\limits_{j=1}^k \lambda_j\,\langle a_{f(j)},x_i\rangle$ which has at least one of the terms $\lambda_j\, \langle a_{f(j)}, x_i\rangle$ negative while others are nonpositive. This is a contradiction to Property \eqref{covering} of covering $H$ by the halfspaces.

  These normals $\{a_{f(j)}\}$ are separable from $0$, in conical position and no other normal is in its positive hull, so we can conclude that their cardinality is not greater than the cone number of $H$.
  \end{itemize}
  
  Due to the analysis of the two cases, the maximum of the Helly number and the cone number is an upper bound of the Carath\'eodory number.

  It remains to show that it is also a lower bound. Indeed, consider the following two cases:
  \begin{itemize}[leftmargin=*]
  \item
    If the maximum is the Helly number with some $k$ normals $B=\{a_1,\ldots,a_k\}$ being the vertices of a simplex whose relative interior contains the origin, we positively scale each $a_i$ so that $a_1 + \dots + a_k = 0$ and choose $X=\{x_1,\ldots,x_k\}$ in the linear span of $B$ so that $\langle a_j, x_i\rangle = -1$ for every $i\ne j$, which in turn means $\langle a_i, x_i\rangle = k-1$. A feature of $X$ is that $0\in\conv X$ since $x_1 + \dots + x_k$ is orthogonal to every $a_i$, i.e. $x_1 + \dots + x_k = 0$.
    
    Because $0\in\conv X$, it follows that $0\in\conv_H X$. Also, $0\notin\conv_H X'$ for any set $X'$ obtained by dropping any $x_i$ since $\langle a_i, x_j\rangle <0$ for every $a_j\in X'$, which has $j\ne i$.

    Therefore, $k$ is a lower bound of the Carath\'eodory number.
    
  \item
    If the maximum is the cone number with a set $B\subseteq H$ of maximal cardinality (if $B$ is infinite, we assume $B$ is maximal up to inclusion) such that $B$ is separable from $0$, in conical position with the positive hull not containing any other point of $H\setminus B$, then we consider the set $X$ of the points $x_i$ corresponding to each $a_i\in B$ such that $\langle a_i, x_i\rangle =0$ for all $i$ while $\langle a_i, x_j\rangle < 0$ for every $j\ne i$. 
    The set $X$ exists due to the conical position of the normals in $B$, where every point is separable from the rest.

    The problem of showing $X$ is an example confirming the bound can be reduced to showing that: For any such $X$, the family of halfspaces $\{a: \langle a, x_i\rangle \ge 0\}$ for each $a_i\in B$ cover the whole $H$. Indeed, assuming it, we have $0\in\conv_H X$ since for every $a\in H$, there is $i$ such that $\langle a, x_i\rangle \ge 0$ due to covering of the halfspaces. Dropping any $x_i$ from $X$ gives a subset $X'$ satisfying $0\notin \conv_H X'$ since $\langle a_i, x_j\rangle  < 0$ for every $j\ne i$. 

    It remains to prove the covering of the halfspaces. We prove by assuming the contrary.
    
    Every point $a$ not covered by the halfspaces is in the intersection of the other halfspaces $\mathcal C=\bigcap\limits_{i} \{a: \langle a, x_i\rangle < 0\}$. Let $\mathcal C\cup B$ be contained in an open halfspace $\{a: \langle \vec{n}, a\rangle >0\}$ for some $\vec{n}$ (note that $\mathcal C\cup B\subseteq \bigcap\limits_{i} \{a: \langle a, x_i\rangle \le 0\}$). For any point $a$, we denote by $\mathcal P(a)$ the intersection between the ray $0a$ and the hyperplane $\{a: \langle \vec{n}, a\rangle=1\}$.
        
    For an uncovered normal $a$, the points in $\{a\}\cup B$ are in conical position since $a\notin\pos B$ by the definition of $B$ and each $a_j\in B$ is separated from $B\setminus\{a_j\}\cup\{a\}$ by the halfspace $\{a: \langle a, x_j\rangle\ge 0\}$.

    Among all the uncovered $a$, we consider a point $a^*$ with the corresponding $\mathcal P(a^*)$ having the minimal distance to $\conv \{\mathcal P(a_i): a_i\in B\}$. Such a point exists due to the closedness of $H$. We show that $\{a^*\}\cup B$ is a counterexample to the maximality of $B$. First, the points are in conical position as previously shown. Second, the positive hull is free of other points of $H$ as otherwise if there is any such point $a$, we obtain a smaller distance between $\mathcal P(a)$ and $\conv\{\mathcal P(a_i): a_i\in B\}$. Indeed, since $\{a^*\}\cup B$ are in conical position, $\{\mathcal P(a^*)\}\cup\{\mathcal P(a_i): a_i\in B\}$ are in convex position. Let $\mathcal P(a)$, which is in the convex hull of those points, be on the segment $\mathcal P(a^*)q$ for some $q\in\conv\{\mathcal P(a_i): a_i\in B\}$. Let $p$ be the point in $\conv\{\mathcal P(a_i): a_i\in B\}$ attaining the minimal distance to $\mathcal P(a^*)$. Consider the triangle $\mathcal P(a^*) p q$, the distance between $\mathcal P(a)$ (which is on $\mathcal P(a^*)q$) and the segment $pq$ is less than the length of $\mathcal P(a^*)p$, contradiction. (The case the triangle gets degenerated is trivial.)
\end{itemize}
  
  Being both the lower bound and the upper bound, the maximum of the Helly number and the cone number is actually the Carath\'eodory number for $H$-convexity.

\section{Proofs of related results}
The results discussed in Section \ref{sec:relations} are proved in this section.
\subsection{Proof of Theorem \ref{thm:lower-bound}}
\label{sec:lower-bound}
  Let us remind that given a supporting hyperplane $L$ through a regular boundary point $x$ of a convex body $K$, for every point $p$ in the same open halfspace as $K$, the homothety $K'$ of $K$ with center $x$ and any big enough ratio contains $p$. Note that $L$ is still a supporting hyperplane of $K'$.

  Let $h'$ be any number at most the Carath\'eodory number for $H$-convexity.\footnote{We consider this strange $h'$ instead of the Carath\'eodory number $h$ to deal with the case $h=\infty$.} We have then a set $X=\{x_1,\dots,x_{h'}\}$ such that $0\in\conv_H X$ but $0\notin\conv_H X'_i$ for any proper subset $X'_i=X\setminus\{x_i\}$.

  For each $i$, let $L$ be a hyperplane with a normal in $H$ strictly separating $0$ and $X'_i$. Translate $K$ to the position such that $L$ is a supporting hyperplane of $K$ through a regular boundary point $x$.
  The homothety $K'$ of $K$ with every sufficiently large ratio and center $x$ contains the whole finite $X'_i$ but not $0$. It means $0\notin\conv_{K'} X'$ for such $K'$.

  Since $X$ is finite, a homothety $K'$ of $K$ with a sufficiently large ratio will work for every $i$. That is for every $i$, we have $0\notin\conv_{K'} X'_i$ . However, $0\in\conv_H X\subseteq \conv_{K'} X$ (note that we also set $K'$ large enough so that a translate can contain $X$). It means the Carath\'eodory number for $K'$-strong convexity, which is the same as the one for $K$-strong convexity, is at least $h'$. The conclusion follows.

\subsection{Proof of Theorem \ref{thm:trivial-upper-bound}}
\begin{lemma}
\label{lem:uniq-guard}
If $p\in\conv_K X$ but $p\notin\conv_K X'$ for any proper subset $X'\subset X$, then for each $x\in X$ we can assign an $a\in H$ (where $H$ is the set of normals of $K$) such that $\langle a,x\rangle \ge \langle a,p\rangle$ and
\[
\langle a,x\rangle > \langle a,y\rangle
\]
for every $y\in X\setminus\{x\}$.
\end{lemma}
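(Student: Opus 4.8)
The plan is to describe translates of $K$ through its support function. Writing $K=\{z:\langle a,z\rangle\le h_K(a)\text{ for all }a\in H\}$, a translate is $K+t=\{z:\langle a,z\rangle\le h_K(a)+\langle a,t\rangle\text{ for all }a\in H\}$, so a set $S$ satisfies $S\subseteq K+t$ exactly when $\max_{s\in S}\langle a,s\rangle\le h_K(a)+\langle a,t\rangle$ for every $a\in H$, and $\conv_K S$ is the intersection of all translates $K+t$ with $S\subseteq K+t$. The key structural feature is that membership of a fixed point in $K+t$ is controlled, facet by facet, by a single affine function of $t$, and that the condition $S\subseteq K+t$ is convex in $t$.

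First I would fix $x\in X$ and produce two translation vectors. Since $p\notin\conv_K(X\setminus\{x\})$, there is a $t_0$ with $X\setminus\{x\}\subseteq K+t_0$ but $p\notin K+t_0$; because $p\in\conv_K X$, this forces $x\notin K+t_0$ as well, since otherwise $K+t_0$ would contain all of $X$ and hence $p$. On the other hand $p\in\conv_K X$ also guarantees that some translate contains all of $X$, so there is $t_1$ with $X\subseteq K+t_1$ (whence $p\in K+t_1$). I would then slide along the segment $t(\lambda)=(1-\lambda)t_0+\lambda t_1$: by convexity of the containment condition, $X\setminus\{x\}\subseteq K+t(\lambda)$ for every $\lambda\in[0,1]$, while $x$ is outside $K+t(\lambda)$ at $\lambda=0$ and inside at $\lambda=1$.

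The heart of the argument is the first crossing. Setting $g_a(\lambda)=\langle a,x\rangle-h_K(a)-\langle a,t(\lambda)\rangle$ for each $a\in H$, we have $x\in K+t(\lambda)$ iff $\max_a g_a(\lambda)\le 0$; let $\lambda^\ast$ be the infimum of the $\lambda$ at which this holds, so $\lambda^\ast\in(0,1]$. Because $H$ is finite, the piecewise-linear function $\max_a g_a$ agrees with a single $g_a$ on a whole left-neighbourhood of $\lambda^\ast$, and there $g_a>0$; by continuity $g_a(\lambda^\ast)=0$. This $a$ is the normal I assign to $x$, and $\langle a,x\rangle=h_K(a)+\langle a,t(\lambda^\ast)\rangle$. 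At $\lambda^\ast$ the translate $K+t(\lambda^\ast)$ contains all of $X$, hence contains $p$, giving $\langle a,p\rangle\le h_K(a)+\langle a,t(\lambda^\ast)\rangle=\langle a,x\rangle$, the first required inequality.

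The main obstacle is the strict inequality $\langle a,x\rangle>\langle a,y\rangle$ for $y\in X\setminus\{x\}$: the crossing alone only yields the weak bound, and a tie cannot be broken by moving $t$, since whether $y$ lies on facet $a$ depends only on $\langle a,y\rangle$ versus $\langle a,x\rangle$. I would resolve it through the very choice of $a$ as a facet that is \emph{strictly} binding just before $\lambda^\ast$: if some $y\in X\setminus\{x\}$ had $\langle a,y\rangle=\langle a,x\rangle$, then for $\lambda<\lambda^\ast$ close we would get $\langle a,y\rangle-h_K(a)-\langle a,t(\lambda)\rangle=g_a(\lambda)>0$, contradicting $y\in K+t(\lambda)$. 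Hence no tie occurs, so $a$ exposes $x$ strictly within $X$ while satisfying $\langle a,x\rangle\ge\langle a,p\rangle$, which completes the assignment.
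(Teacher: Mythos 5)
Your proof is correct and follows essentially the same route as the paper's: both slide a translate of $K$ along a segment from a position containing $X\setminus\{x\}$ but neither $x$ nor $p$ to a position containing all of $X$, assign to $x$ the normal of the facet that is critically active at the crossing translate, get $\langle a,x\rangle\ge\langle a,p\rangle$ because that translate contains $X$ and hence $p$, and get strictness because every $y\ne x$ remains inside throughout the slide and so cannot sit on the exiting facet. The only cosmetic difference is that you phrase the argument via support functions and use finiteness of $H$ (i.e., that $K$ is a polytope, which is the setting of Theorem \ref{thm:trivial-upper-bound} where the lemma is applied), whereas the paper argues directly with supporting hyperplanes of the critical translate.
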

\begin{proof}
We assume $X\subseteq K$ without loss of generality.
For each $x\in X$, consider a translate $K-t$ such that $X\setminus\{x\}\subseteq K-t$ but both $p$ and $x$ are not in $K-t$. When we translate $K$ from $K$ to $K-t$, there exists a translate $K-t^*$ for $t^* = \alpha t$ ($0\le\alpha<1$) such that $x$ is on a supporting hyperplane $L$ with some normal $a$ of $K-t^*$ and the translate $K-t^*-\epsilon t$ (for any small enough $\epsilon > 0$) does not contain $x$. Every point $y$ in $X\setminus\{x\}$ is not on $L$, since otherwise $K-t^*-\epsilon t$ would not contain $y$.
The translate $K-t^*$ still contains $p$, since otherwise $K-t^*$ would contain the whole $X$ but not $p$, which implies $p\notin\conv_K X$, contradiction. The conclusion follows.
\end{proof}

Now we can prove Theorem \ref{thm:trivial-upper-bound} as a corollary of Lemma \ref{lem:uniq-guard}.
\begin{proof}[Proof of Theorem \ref{thm:trivial-upper-bound}]
Consider a point $p$ in $\conv_K X$ so that $p\notin\conv_K X'$ for any proper subset $X'\subset X$.

If $p$ is in $\conv_H X$, then
\[
p\in\conv_H X'\subseteq \conv_K X'
\]
for some subset $X'$ of at most $h$ points. That is $|X|\le h$.

It remains to show that $|X|$ is at most $|H|-1$ for the case $p$ is not in $\conv_H X$.
By Lemma \ref{lem:uniq-guard}, there exists an assignment of $a\in H$ to each point $x\in X$ such that $\langle a, x\rangle \ge \langle a, p\rangle$ and $\langle a, x\rangle > \langle a, y\rangle$ for every $y\in X\setminus\{x\}$. 
Since $p$ is not in $\conv_H X$, there must be a normal $a\in H$ such that $\langle a, x\rangle < \langle a, p\rangle$ for every $x\in X$. It means there remain at most $|H|-1$ normals of $H$ for the assignments to the points in $X$. In other words, $|X|\le |H|-1$ and the theorem follows.
\end{proof}

\subsection{Proof of Theorem \ref{thm:cara>helly}}
This proof is inspired by the proofs of the equivalence between Helly's theorem and Carath\'eodory's theorem in \cite[Chapter $2$]{eggleston1958convexity}.

Suppose the Helly number for the considered convex-structure is $h$. It means there are $h$ sets $S_1,\dots, S_h$ such that their intersection is empty but the intersection of every $h-1$ sets among them is nonempty.

Let $x_i\in \bigcap\limits_{j\ne i} S_j$ for each $i=1,\dots,h$, and let $\mathcal S=\conv\{x_1,\dots,x_h\}$. We show that $\mathcal S\setminus \bigcup\limits_i S_i$ is nonempty.

Let $p$ be the point in $\mathcal S$ minimizing the quantity $d=\max\limits_i d(p, S_i)$ where $d(p,S_i)$ is the distance from $p$ to $S_i$.
As $\bigcap\limits_i S_i=\emptyset$, the minimal $d$ is positive. We show that $p\notin S_i$ for every $i$. Assume otherwise, $p\in S_i$ for some $i$. Consider a slight translate $p'=p+\epsilon(x_i-p)$ of $p$ towards $x_i$ for a small enough $\epsilon>0$. Since $x_i\in\bigcap\limits_{j\ne i} S_i$, we have $d(p',S_j)<d(p,S_j)$ for every $j\ne i$ satisfying $d(p,S_j)>0$ (that is $p\notin S_j$). For those $j\ne i$ with $d(p,S_j)=0$, we still have $d(p',S_j)=0$. Although we may have a positive $d(p',S_i)$ which is greater than the zero $d(p,S_i)$, we still have $\max\limits_j d(p',S_j) < \max\limits_j d(p,S_j)$ since $\epsilon$ is small enough. As $p'$ is still in $\mathcal S$, we have a contradiction.

Denote by $\mathcal C$ the considered convex-structure and by $\conv_{\mathcal C}$ the corresponding convex hull. As $p\in\conv \{x_1,\dots,x_h\}$, we have $p\in\conv_{\mathcal C} \{x_1,\dots,x_h\}$. However, for every $i$, we have $\{x_1,\dots,x_h\} \setminus\{x_i\}\subseteq S_i$ and $p\notin S_i$, thus $p\notin \conv_{\mathcal C}(\{x_1,\dots,x_h\} \setminus\{x_i\}$). This example indicates that the Carath\'eodory number is at least the Helly number.

\section*{Acknowledgements}
The author would like to thank Roman Karasev for reading and commenting on many pieces of this paper, in particular, for his suggestion to the manipulation of halfspaces.

\bibliographystyle{unsrt}
\bibliography{carapoly}

\begin{thebibliography}{10}

\bibitem{boltyanski2012excursions}
Vladimir Boltyanski, Horst Martini, and Petru~S Soltan.
\newblock {\em Excursions into combinatorial geometry}.
\newblock Springer Science \& Business Media, 2012.

\bibitem{polovinkin1996strongly}
Evgenii~Sergeevich Polovinkin.
\newblock Strongly convex analysis.
\newblock {\em Sbornik: Mathematics}, 187(2):259, 1996.

\bibitem{langi2013ball}
Zsolt L{\'a}ngi, M{\'a}rton Nasz{\'o}di, and Istv{\'a}n Talata.
\newblock Ball and spindle convexity with respect to a convex body.
\newblock {\em Aequationes mathematicae}, 85(1-2):41--67, 2013.

\bibitem{bui2021caratheodory}
Vuong Bui and Roman Karasev.
\newblock {On the Carath{\'e}odory number for strong convexity}.
\newblock {\em Discrete \& Computational Geometry}, 65(3):680--692, 2021.

\bibitem{karasev2001characterization}
RN~Karas{\"e}v.
\newblock On the characterization of generating sets.
\newblock {\em Modelir. Anal. Inf. Sist}, 8(2):3--9, 2001.

\bibitem{boltyanski2001caratheodory}
V~Boltyanski and Horst Martini.
\newblock {Carath{\'e}odory's theorem and $H$-convexity}.
\newblock {\em Journal of Combinatorial Theory, Series A}, 93(2):292--309,
  2001.

\bibitem{bui2025every}
Vuong Bui.
\newblock Every generating polytope is strongly monotypic.
\newblock {\em Discrete \& Computational Geometry}, 73(1):49--61, 2025.

\bibitem{mcmullen1974monotypic}
P~McMullen, R~Schneider, and GC~Shephard.
\newblock Monotypic polytopes and their intersection properties.
\newblock {\em Geometriae Dedicata}, 3:99--129, 1974.

\bibitem{borowska2008strongly}
D~Borowska and J~Grzybowski.
\newblock {Strongly monotypic polytopes in $\mathbb R^3$}.
\newblock {\em Demonstratio Mathematica}, 41(1):165--170, 2008.

\bibitem{holmsen2017colorful}
Andreas Holmsen and Roman Karasev.
\newblock Colorful theorems for strong convexity.
\newblock {\em Proceedings of the American Mathematical Society},
  145(6):2713--2726, 2017.

\bibitem{hammer1960kuratowski}
Preston~C Hammer.
\newblock Kuratowski’s closure theorem.
\newblock {\em Nieuw Arch. Wisk}, 8(2):74--80, 1960.

\bibitem{eggleston1958convexity}
Harold~Gordon Eggleston.
\newblock {\em Convexity}.
\newblock CUP Archive, 1958.

\end{thebibliography}

\end{document}